\newtheorem{theorem}{Theorem}
\newtheorem*{theorem*}{Theorem}
\newtheorem{proposition}[theorem]{Proposition}
\newtheorem{lemma}[theorem]{Lemma}
\newtheorem{corollary}[theorem]{Corollary}
\newtheorem*{corollary*}{Corollary}
\theoremstyle{definition}
\newtheorem{definition}[theorem]{Definition}
\newtheorem{remark}[theorem]{Remark}
\newtheorem{example}[theorem]{Example}
\numberwithin{theorem}{section}
\numberwithin{equation}{section}
\numberwithin{figure}{section}
\def\thmhead@plain#1#2#3{%
  \thmname{#1}\thmnumber{\@ifnotempty{#1}{ }\@upn{#2}}%
  \thmnote{ {\the\thm@notefont#3}}}
\let\thmhead\thmhead@plain
\def\PP{\mathbb{P}}
\def\R{\mathbb{R}}
\def\sC{\mathcal{C}}
\def\sD{\mathcal{D}}
\def\sE{\mathcal{E}}
\def\sL{\mathcal{L}}
\def\sW{\mathcal{W}}
\def\fX{\mathfrak{X}}
\def\fg{\mathfrak{g}}
\def\eps{\epsilon}
\def\bra{\langle}
\def\ket{\rangle}
\newcommand{\spacedt}[1]
{
\quad{\textup{#1}}\quad
}
\newcommand{\Matrix}[1]
{
\left(
\begin{matrix}
#1
\end{matrix}
\right)
}
\newcommand{\restrto}[2]
{
\left.{#1}\right|_{#2}
}
\newcommand{\su}[1]
{
_{_{#1}}
}
\def\imm{\hookrightarrow}
\newcommand{\D}[3][{}]
{
\frac{d^{#1}{#2}}{d{#3}^{#1}}
}
\def\Op{\mathcal{O}p}
\newcommand{\pf}[1]{\mathop{\mathrm{#1}}\nolimits}
\newcommand{\rot}[1]{\pf{rot}_{\gamma,p}\left(#1\right)}
\newcommand{\maxrot}[1][{L}]{\pf{max}\rot{#1}}
\begin{document}

\title{On the dynamics of some vector fields tangent to non-integrable plane fields}

\subjclass[2010]{Primary: 37D15. Secondary: 37C27, 53D99, 58A30.}

\author{Nicola Pia}

\date{The author is supported by the DAAD programme Research Grants for Doctoral Candidates and Young Academics and Scientists (more than 6 months) No. 57381410, 2018/19}

\address{Mathematisches Institut, LMU M\"unchen, Theresienstr. 39, 80333 M\"unchen, Germany}

\email{pia@math.lmu.de}

\begin{abstract}
Let $\sE^3\subset TM^n$ be a smooth $3$-distribution on a smooth manifold of dimension $n$ and $\sW\subset\sE$ a line field such that $[\sW,\sE]\subset\sE$.
Under some orientability hypothesis, we give a necessary condition for the existence of a plane field $\sD^2$ such that $\sW\subset\sD$ and $[\sD,\sD]=\sE$.
Moreover we study the case where a section of $\sW$ is non-singular Morse-Smale and we get a sufficient condition for the global existence of $\sD$.

As a corollary we get conditions for a non-singular vector field $W$ on a $3$-manifold to be Legendrian for a contact structure $\sD$.
Similarly with these techniques we can study when an even contact structure $\sE\subset TM^4$ is induced by an Engel structure $\sD$.
\end{abstract}

\maketitle

\section{Introduction}
The only topologically stable families of smooth distributions on smooth manifolds are line fields, contact structures, even contact structures, and Engel structure \cite{cartan, engel, gersch, presas}.
An even contact structure is a maximally non-integrable hyperplane field on an even dimensional manifold.
An Engel structure is $2$-plane field $\sD$ on a $4$-manifold $M$ such that $\sE=[\sD,\sD]$ is an even contact structure.
Engel structures were discovered more than a century ago \cite{cartan,engel} and they sparked big interest throughout the years \cite{cp3,montgomeryPaper,vogelGeometricEngel,vogelExistence}.

The motivation behind this paper was to understand which even contact structures $(M^4,\sE)$ are induced by Engel structures $\sD$, i.e. $[\sD,\sD]=\sE$.
There are some obvious topological obstructions since $M$ admits an even contact structure if (up to a $2$-cover) its Euler characteristic vanishes (see \cite{mcduff}), whereas it admits an Engel structure only if it is parallelizable (up to a $4$-cover, see \cite{vogelExistence}).
For this reason we only consider even contact structures $\sE$ which admit a framing $\sE=\bra W,\, A,\, B\ket$ where $W$ spans the characteristic foliation, i.e. the unique line field $\sW\subset\sE$ satisfying $[\sW,\sE]\subset\sE$.
Then an orientable Engel structure compatible with $\sE$ takes the form $\sD_L=\bra W,\,L\ket$, where $L\in\bra A,\, B\ket$ and $[\sD_L,\sD_L]=\sE$.

It turns out that the same framework can be used to describe different contexts.
For example if $M$ is an orientable manifold of dimension $3$ and $\sE:=TM=\bra W,\, A,\, B\ket$, then a plane field of the form $\sD_L=\bra W,\,L\ket$, where $L\in\bra A,\, B\ket$ and $[\sD_L,\sD_L]=\sE$, is an orientable contact structure for which $W$ is Legendrian.
For this reason we can tackle the problem of finding a contact structure such that a given non-singular vector field $W$ is Legendrian with the same tools.
This question has already been studied in the case of Morse-Smale gradient vector fields in \cite{etnyreghrist}.

We introduce a more general family of distributions which permits to treat the above cases at once.
For a given $3$-distribution $\sE\subset TM$ on a manifold $M$ we say that a $2$-plane field $\sD\subset\sE$ \emph{generates} $\sE$ or that $\sD$ is \emph{maximally non-integrable within $\sE$} if $[\sD,\sD]=\sE$.
Moreover if $\sE=\bra W,\, A,\, B\ket$ and $\sW=\bra W\ket$ satisfies $[\sW,\sE]\subset\sE$, we say that $\sD=\bra W,\, L\ket$ generates $\sE$ up to homotopy if there is a homotopy of the form $\sD_s=\bra W,\, L_s\ket$ for $s\in[0,1]$ such that $\sD_1$ generates $\sE$ (see Section~\ref{SEC_Notation} for more details).

Since the flow $\phi_t$ of $W$ preserves $\sE$, for a given $L\in\bra A,\, B\ket$ we can write
\[
 \Big(T_{\phi_{_{t}}(p)}\phi_{-t}\Big)L(\phi_{_{t}}(p))=\rho(p;t)\Big(\cos\theta(p;t)\ A(p)+\sin\theta(p;t)\ B(p)\Big)+c(p;t)\ W(p).
\]
If the function $\theta(p;t)$ has non-vanishing derivative then $\sD=\bra W,\, L\ket$ generates $\sE$ on the orbit of $p$.
For this reason if $\gamma$ is a closed orbit of $W$ through $p$ of period $T$, we consider the quantity
\[
 \rot{L}=\theta(p;T)-\theta(p;0),
\]
which we call the \emph{rotation number of $L$ along $\gamma$ at $p$}.

It turns out that this is not invariant under homotopies of $L$ (see Example~\ref{EXM_RotNotInvariant}).
One needs to consider instead the rotation number of the vector fields obtained by rotating $L$ in the plane $\bra A,\, B\ket$ by a constant phase $\eta$ and take the maximum, this is what we denote by $\maxrot$.
This quantity is invariant under homotopies and it gives an obstruction to the existence of $\sD$ generating $\sE$.
\begin{theorem*}[A]
	Let $\sE=\bra W,\,A,\,B\ket$ be a distribution of rank $3$ such that $[\sW,\sE]\subset\sE$, and let $\gamma$ a closed orbit of $W$.
	Then $\sD_L=\bra W,\,L\ket$ generates $\sE$ on $\gamma$ up to homotopy within $\sE$ if and only if there exists a point $p\in\gamma$ such that $|\maxrot|>0$.
\end{theorem*}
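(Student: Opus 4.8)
The plan is to reduce ``generates $\sE$'' to a transversality/monotonicity condition and then read off the equivalence from a flow on a surface. First I would pass to the rank-$2$ quotient bundle $\sE/\sW$: since $[\sW,\sE]\subset\sE$ and $\phi_t$ fixes $W$, the flow induces a linear flow $\bar\phi_t$ on $\sE/\sW$, whose time-$T$ map over $\gamma$ is the linearized return map $\Phi$ on the fibre $(\sE/\sW)_p$. A field $L\in\bra A,\,B\ket$ descends to a nowhere-zero section $\bar L$, and the displayed expansion identifies $\theta(p;t)$ with the angular coordinate of $\bar\phi_{-t}\bar L(\phi_t(p))$ in the fibre at $p$ (the radius $\rho$ is nowhere zero, since an isomorphism cannot send the non-zero $\bar L$ into $\sW$). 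Differentiating the displayed formula and using that $[W,L]$ equals the $t$-derivative of the transported field, I would read off that $W,L,[W,L]$ span $\sE$ at $\phi_s(p)$ exactly when the component of this derivative transverse to $L$ inside $\bra A,\,B\ket$ is non-zero, i.e. when $\del_t\theta(p;s)\ne0$. Thus $\sD_L$ generates $\sE$ on all of $\gamma$ iff $t\mapsto\theta(p;t)$ is strictly monotone on $[0,T]$.

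Next I would package this as transversality in a mapping torus. Under the orientability hypothesis $\PP(\sE/\sW)|_\gamma$ is the mapping torus of the circle diffeomorphism $R=[\Phi]$, with $\bar\phi_t$ its suspension flow, transverse to the fibres; a line field $\sD_L$ is a section $\sigma_L$, and by the previous step $\sigma_L$ generates $\sE$ iff it is everywhere transverse to the suspension flow. Trivializing by backward flow turns the fibre coordinate of $\sigma_L$ into $\theta(p;\cdot)$ and the flow into $\del_t$, so closing up around $\gamma$ imposes $\theta(p;T)=\tilde R(\theta(p;0))+2\pi\delta$, where $\tilde R$ is the lift of $R$ obtained by following $\bar\phi_{-t}$ from the identity and $\delta\in\Z$ is the degree of $\sigma_L$ --- the only homotopy invariant of the line field over $S^1$. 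Conversely any smooth angle path obeying this relation is realized by a line field $\bar L(\phi_s(p))=[\bar\phi_s(\textrm{direction }\theta(s))]$, homotopic to the original one precisely when it has the same degree $\delta$. Hence, inside the homotopy class, a generating representative exists iff some starting angle $a=\theta(p;0)$ makes the increment $\tilde R(a)-a+2\pi\delta$ non-zero, for then a strictly monotone interpolation between the two endpoints is available.

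Then I would identify $\maxrot$ and run both implications. Rotating $L$ by a constant phase $\eta$ shifts the starting angle to $a+\eta$ while preserving $\delta$, so $\rot{L^\eta}=\big(\tilde R(a+\eta)-(a+\eta)\big)+2\pi\delta$, and letting $\eta$ traverse the circle extremizes $\tilde R-\mathrm{id}$ over all starting angles. This is exactly why the bare $\rot{L}$ fails to be invariant --- a homotopy inside class $\delta$ moves $a$ and hence moves $\rot{L}$, as in Example~\ref{EXM_RotNotInvariant} --- while the extremum over $\eta$ depends only on $\delta$ and on the flow along $\gamma$, giving the homotopy invariant $\maxrot$. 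For necessity, a homotopic $L_1$ generating $\sE$ on $\gamma$ has strictly monotone $\theta(p;\cdot)$, so $\rot{L_1}\ne0$, and invariance gives $|\maxrot|>0$. For sufficiency, $|\maxrot|>0$ at $p$ says precisely that $\tilde R-\mathrm{id}+2\pi\delta$ is not identically zero; picking $a$ where it is non-zero and interpolating monotonically yields, through the realization recipe, a homotopic $L_1$ with $\theta(p;\cdot)$ strictly monotone, hence generating.

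The main obstacle will be the content of the third step: proving that the extremum over the phase $\eta$ really is the optimal rotation attainable in the homotopy class and is a genuine invariant, and matching orientations so that monotonicity of either sign corresponds to $|\maxrot|>0$ rather than to a one-sided inequality. This is also where the basepoint quantifier is forced: the displacement $\xi\mapsto\tilde R(\xi)-\xi$ is merely conjugated, not preserved, as $p$ runs along $\gamma$, so its relevant extremum may vanish at some basepoints yet not at others. The statement therefore only requires one $p$ with $|\maxrot|>0$, and the delicate point is to show this single-basepoint condition is still equivalent to the basepoint-free requirement that $\tilde R-\mathrm{id}+2\pi\delta$ be somewhere non-zero on $\gamma$.
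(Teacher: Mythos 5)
Your proposal is correct in substance and reaches the theorem by a genuinely different organization of the same analytic core. Both arguments rest on the equivalence ``generates $\iff\dot\theta\ne0$'' (Proposition~\ref{PROP_EngelConditionOnTheta}, which you rederive), and both produce the generating representative by homotoping the angle function along the orbit: your strictly monotone interpolation, realized by pushing a chosen angle path forward with the flow, is the paper's flow-box homotopy with a step function. Where you diverge is the invariance step. The paper handles it dynamically, via Lemma~\ref{LEM_HomInvarianceRot} (invariance of $\rot{\cdot}$ under homotopies fixing $L(p)$) together with a homotopy rel $\{p\}$ between $L_1$ and $R(\eta)\circ L$ extracted from commutativity of $\pi_1(S^1)$; you instead classify sections over $\gamma$ by degree and derive the closed formula $\Phi^L_{\gamma,p}(\eta)=\tilde R(a+\eta)-(a+\eta)+2\pi\delta$. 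This buys more than the paper's proof makes explicit: it exhibits $\maxrot=\max\bigl(\tilde R-\mathrm{id}\bigr)+2\pi\delta$ as a function of the monodromy and the degree alone, so invariance is transparent, it explains Example~\ref{EXM_RotNotInvariant} structurally, and it essentially subsumes the case analysis of Proposition~\ref{PROP_RotForTypeOfClosedOrbits}. Two small repairs: you pass to $\PP(\sE/\sW)$, whose fibre $\R\PP^1$ has deck translation $\pi$, but since $L$ is a vector field you should work in the unit-circle bundle (equivalently the $S^1$ given by the framing) so that the closing relation carries $2\pi\delta$ as written; and the paper's definition requires $\sD_{L_1}$ to generate on a \emph{neighbourhood} of $\gamma$ via a homotopy of \emph{global} vector fields, which needs a word --- openness of the generating condition along the compact $\gamma$ plus a cut-off extension of your homotopy from $\gamma$ to a tubular neighbourhood (the paper builds this into the flow box).

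On the two obstacles you flag at the end: the basepoint one dissolves inside your own framework. Moving $p$ to $\phi_s(p)$ conjugates the lifted return map by the increasing diffeomorphism coming from the transport, and conjugation by an increasing map preserves the sign of the displacement pointwise, since $h(g(x))-h(x)$ has the sign of $g(x)-x$; hence the sign of $\max(\tilde R-\mathrm{id})+2\pi\delta$, and with it the condition $|\maxrot|>0$, is the same at every $p\in\gamma$. So your worry that the extremum ``may vanish at some basepoints yet not at others'' cannot occur, and the existential quantifier is automatically uniform. The orientation-matching obstacle, by contrast, is real, and your necessity step inherits it: from a generating $L_1$ with $\dot\theta<0$ you only obtain $\Phi^L_{\gamma,p}(\eta_1)<0$ for one phase, and $\max_\eta\Phi$ can still vanish --- exactly for negative parabolic monodromy with $\delta=0$, where the displacement is $\le0$ and vanishes at the fixed direction. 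What your formula cleanly proves is the pair of one-sided statements: $\sD_L$ positively (resp.\ negatively) generates up to homotopy iff $\max_\eta\Phi>0$ (resp.\ $\min_\eta\Phi<0$). Note, however, that the paper's own proof makes the identical silent jump from $\rot{L_1}\ne0$ to $|\maxrot|>0$, so this is a wrinkle in the statement's use of $\max_\eta\Phi$ rather than $\max_\eta|\Phi(\eta)|$, shared with the paper and not a defect of your route relative to it; in the applications (Theorem~\ref{THM_ExistenceOfDForNMS} and its corollaries) only the one-sided positive version is used, which both your argument and the paper's establish correctly.
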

The behaviour of the rotation number under homotopies depends on the type of the closed orbit $\gamma$, i.e. the type of the action of the flow on $\sE/\sW$.
If $\gamma$ is elliptic, the rotation number is invariant under homotopies of $L$.
In this case there exists a vector field $L_1$ homotopic to $L$ through sections of $\sE$ and such that $\bra W,\, L_1\ket$ positively generates $\sE$ on a neighbourhood of $\gamma$ if and only if $\rot{L}>0$.
This condition is very easy to verify if $\gamma$ bounds an embedded disc.

If the dynamics of the vector field $W$ are particularly simple, namely if it is a non-singular Morse-Smale vector field (briefly NMS), we can give a necessary and sufficient condition for the existence of $\sD$ generating $\sE$.

\begin{theorem*}[B]
    Let $\sE=\bra W,\,A,\,B\ket$ be a rank $3$ distribution such that $[\sW,\sE]\subset\sE$, and let $W$ be a NMS vector field.
    There exists $\sD\subset\sE$ that positively generates $\sE$ if and only if there exists a vector field $L\in\bra A,\,B\ket$ such that $\maxrot>0$ for all $\gamma$ closed orbit of $W$.
\end{theorem*}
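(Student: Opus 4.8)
\emph{The plan.} I would treat the two implications separately, the forward one being immediate and the converse carrying all the work. Throughout, write $L$ via its angular coordinate $\beta$ in the frame $\bra A,\,B\ket$ modulo $W$, and let $\omega$ denote the infinitesimal rotation that the flow $\phi_t$ induces on $\sE/\sW\cong\bra A,\,B\ket$ (read off from the $\bra A,\,B\ket$--components of $[W,A]$ and $[W,B]$, which lie in $\sE$ by hypothesis). With this notation, positive generation of $\sE$ by $\sD=\bra W,\,L\ket$ is exactly the pointwise differential inequality $\mathcal{L}_W\beta+\omega>0$, since $\tfrac{d}{dt}\theta(p;t)\big|_{t=0}$ equals the left--hand side. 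For \emph{necessity}, if $\sD=\bra W,\,L\ket$ positively generates $\sE$ then $\theta(p;t)$ is strictly increasing along every closed orbit $\gamma$, so $\rot{L}=\theta(p;T)-\theta(p;0)>0$ and hence $\maxrot\ge\rot{L}>0$; this $L$ witnesses the condition.

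For \emph{sufficiency} I start from a field $L$ with $\maxrot>0$ on every closed orbit and produce, \emph{within the homotopy class of} $L$, a representative solving $\mathcal{L}_W\beta+\omega>0$ on all of $M$. First I localise near the recurrence. Because $W$ is NMS its non--wandering set is a finite disjoint union of hyperbolic closed orbits $\gamma_1,\dots,\gamma_k$; fix disjoint tubular neighbourhoods $U_i$. On each $\gamma_i$ the hypothesis $\maxrot>0$ means the loop $L|_{\gamma_i}$ is homotopic, after a constant phase shift $\eta_i$, to one whose total rotation relative to the frame is strictly positive; by the local analysis already used for Theorem~A (and the elliptic refinement recorded above) I may homotope $L$ over $U_i$ to a field $L_i$ realising $\mathcal{L}_W\beta+\omega>0$ throughout $U_i$.

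The heart of the argument is to glue these local solutions across the gradient--like region $M\setminus\bigcup_i U_i$. Here I invoke the structure theory of NMS flows: $M$ carries a Lyapunov function, equivalently a round--handle (Asimov--Morgan) decomposition subordinate to the flow, in which every orbit of the complementary region travels from the unstable boundary of some $U_i$ to the stable boundary of some $U_j$ in uniformly bounded time, with no recurrence. The crucial point is that the inequality $\mathcal{L}_W\beta>-\omega$ has \emph{no monodromy obstruction} off the closed orbits: since I only ever need $\theta$ to increase, any two prescribed boundary phases can be matched by letting $\beta$ spin fast enough along the flow, and the bounded transit time keeps the strict inequality intact while absorbing the difference of the normalisations $\eta_i,\eta_j$ of neighbouring local models. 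Performing this extension inductively, one round handle at a time down the filtration, and checking at each stage that the resulting $L$ stays in the homotopy class of the original (so that $\maxrot$ is unchanged), produces the global positively generating $\sD=\bra W,\,L\ket$.

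The step I expect to be the main obstacle is precisely this last gluing: one must arrange the spinning so that strict positivity survives as orbits pass near the heteroclinic connections where the stable and unstable manifolds of distinct $\gamma_i$ meet transversally and where $\omega$ may be large, while simultaneously reconciling the independent phase choices $\eta_i$ and not altering the ambient homotopy class of $L$. Organising the bookkeeping through the round--handle filtration, rather than orbit by orbit, is what makes the competing requirements compatible.
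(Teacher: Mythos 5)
Your proposal follows essentially the same route as the paper's proof. The forward implication is identical: positive generation forces $\dot\theta>0$ along every closed orbit (Proposition~\ref{PROP_EngelConditionOnTheta}), so $\maxrot\ge\rot{L}>0$. For the converse you use exactly the paper's scheme: induct over the Asimov--Morgan round-handle decomposition subordinate to the NMS flow (Theorem~\ref{THM_DecompositionAssociatedToNSMS}), solve near each closed orbit by the local homotopy of Theorem~\ref{THM_IFFforHomotMaxNonIntegrable}, and absorb the phase mismatch on each attaching region by letting $L$ spin rapidly in the flow direction across a collar of $\partial M_{k-1}$, where transversality of $W$ gives a uniform positive transit time; this is precisely the paper's substitution of the small embedding $h_p$ by a curve $\tilde h_p$ making more than $K=\max(f_1-f_0)$ turns. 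You also correctly identify this gluing as the crux. (One terminological caution: NMS orbits are hyperbolic as periodic orbits, but the elliptic/parabolic/hyperbolic trichotomy of Section~\ref{SEC_CharacterOfWOrbits} concerns the projectivized monodromy on $\sE/\sW$, which can be elliptic even for a dynamically hyperbolic orbit; you seem aware of this, but the two notions should not be conflated.)

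The one step that is wrong as written is your requirement that each gluing modification ``stays in the homotopy class of the original $L$ (so that $\maxrot$ is unchanged).'' Inserting extra full turns along every flow segment crossing a collar changes the value of $L:M\to S^1$ on loops meeting that hypersurface transversally, so in general it \emph{does} change the homotopy class of $L$; if you insisted on preserving it, the spinning trick would be obstructed and the induction would stall. The paper explicitly renounces this constraint: it remarks that one is ``not interested in plane fields homotopic to $\bra W,\,L\ket$,'' only in the existence of some positively generating $\sD$ --- and indeed the theorem asserts bare existence, not existence within a homotopy class. What your induction actually needs is weaker and true: the modification is supported in the attaching collar, hence $L$ is unchanged on neighbourhoods of the closed orbits $\gamma_j$ not yet absorbed, so the hypothesis $\maxrot>0$ for the \emph{original} $L$ still applies at the next invocation of Theorem~\ref{THM_IFFforHomotMaxNonIntegrable}. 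Replacing your homotopy-class check by this locality statement repairs the argument and makes it agree with the paper's.
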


If we specialize this result in the case of contact structures and Engel structures we obtain the following interesting facts. 

\begin{corollary*}[C]
    Let $M$ be a closed orientable $3$-manifold and let $W$ be a NMS vector field on $M$ such that $TM=\bra W,\,A,\,B\ket$.
    There exists a positive contact structure $\sD$ for which $W$ is Legendrian if and only if there exists a vector field $L\in\bra A,\,B\ket$ such that $\maxrot>0$ for all $\gamma$ closed orbit of $W$.
\end{corollary*}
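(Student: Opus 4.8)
The plan is to read Corollary~C as nothing more than Theorem~B applied with $\sE := TM$, so that the entire task is to match the paper's terminology with the standard language of contact geometry. First I would verify that the hypotheses of Theorem~B hold in this setting. Since $\dim M = 3$, the bundle $TM$ is a rank-$3$ distribution; the requirement $[\sW, \sE] \subset \sE$ is automatic because $\sE = TM$; the NMS hypothesis on $W$ is inherited directly; and the assumed splitting $TM = \langle W, A, B\rangle$ supplies the vector fields $A, B$ that Theorem~B needs. Orientability of the closed $3$-manifold $M$ is what guarantees such a global frame exists in the first place. Hence Theorem~B is available verbatim, and in the paper's convention the plane field it produces is of the form $\sD = \langle W, L\rangle$, so it automatically contains $W$.

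The content of the corollary is then the translation of its two key phrases. For the first, I would recall that a $2$-plane field $\sD \subset TM$ on a $3$-manifold satisfies $[\sD, \sD] = TM$ if and only if $\sD$ is a contact structure: writing $\sD = \ker\alpha$ locally and choosing a local frame $X, Y$ of $\sD$, the invariant formula for the exterior derivative gives $d\alpha(X, Y) = -\alpha([X, Y])$, so $[X, Y] \notin \sD$ at a point exactly when $\alpha \wedge d\alpha \neq 0$ there. Moreover the orientation used in the paper to call a generation \emph{positive} is the one fixed by the ordered frame $W, A, B$, which I would confirm is the same orientation under which $\alpha \wedge d\alpha > 0$ defines a positive contact structure; with this identified, the phrase ``$\sD$ positively generates $\sE$'' becomes ``$\sD$ is a positive contact structure.'' For the second phrase, since $\{W, A, B\}$ is a frame, any $2$-plane containing $W(p)$ has the form $\langle W, aA + bB\rangle$ with $(a, b) \neq (0, 0)$, so the plane fields $\sD = \langle W, L\rangle$ with $L \in \langle A, B\rangle$ a nonvanishing section are precisely the $2$-plane fields containing $W$, i.e.\ those for which $W$ is Legendrian.

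Assembling these translations, I would present the statement as a single chain of equivalences: the existence of a positive contact structure $\sD$ for which $W$ is Legendrian is, by the dictionary above, equivalent to the existence of a plane field $\sD = \langle W, L\rangle$ with $L \in \langle A, B\rangle$ that positively generates $\sE = TM$, which by Theorem~B is equivalent to the existence of a vector field $L \in \langle A, B\rangle$ with $\maxrot > 0$ on every closed orbit $\gamma$ of $W$. That last condition is exactly the one in the statement, completing the argument.

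The only step that demands real care---and the one I would write out in full---is the orientation bookkeeping hidden in the equivalence ``positively generates $\sE$'' $\Leftrightarrow$ ``positive contact structure.'' One must check that the co-orientation implicit in the sign of $\rot{L}$, and hence in the word \emph{positive} as used in Theorems~A and~B, is compatible with the convention $\alpha \wedge d\alpha > 0$, so that the two positivity notions genuinely coincide rather than differing by a global sign. Everything else is a direct transcription of Theorem~B together with the standard fact about $2$-plane fields on $3$-manifolds recalled above.
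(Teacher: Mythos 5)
Your proposal is correct and matches the paper's own route: the paper derives this corollary as an immediate specialization of Theorem~\ref{THM_ExistenceOfDForNMS} to $\sE = TM$, having already recorded in Section~\ref{SEC_BasicNotions} the dictionary that a plane field $\sD = \bra W,\,L\ket$ with $L \in \bra A,\,B\ket$ generating $TM$ is precisely an orientable contact structure with $W$ Legendrian. Your additional care about the orientation bookkeeping (positivity of $\dot\theta$ versus $\alpha \wedge d\alpha > 0$) and about why a positive (hence co-orientable, hence orientable) contact plane containing $W$ admits a global $L \in \bra A,\,B\ket$ is sound and, if anything, more explicit than the paper, which leaves these translations implicit.
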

The previous corollary is new since it applies to non-singular vector fields, whereas the ones treated in \cite{etnyreghrist} always admit singularities.

\begin{corollary*}[D]
 	Let $(M,\,\sE=\bra W,\,A,\,B\ket)$ be a closed, oriented even contact $4$-manifold with Morse-Smale characteristic foliation.
	Then there exists a positive Engel structure $\sD$ compatible with $\sE$ if and only if there exists a vector field $L\in\bra A,B\ket$ such that $\maxrot>0$ for all $\gamma$ closed characteristic orbits.
\end{corollary*}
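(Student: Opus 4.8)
The plan is to obtain Corollary~D as a direct specialization of Theorem~B, the only real work being to match the Engel-geometric vocabulary with the abstract setting. First I would invoke the identification already set up in the introduction: for an even contact structure $\sE$ with a global framing $\sE=\bra W,\,A,\,B\ket$ in which $W$ spans the characteristic foliation, an orientable Engel structure $\sD$ compatible with $\sE$ is precisely a plane field of the form $\sD_L=\bra W,\,L\ket$ with $L\in\bra A,\,B\ket$ and $[\sD_L,\sD_L]=\sE$. Thus ``Engel structure compatible with $\sE$'' is literally the same datum as ``$\sD_L=\bra W,\,L\ket$ generates $\sE$'', and under the orientation conventions fixed in Section~\ref{SEC_Notation} a \emph{positive} Engel structure corresponds to a $\sD_L$ that \emph{positively} generates $\sE$.

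Next I would verify that the hypotheses of Theorem~B hold verbatim. The even contact structure $\sE$ is a rank~$3$ distribution, and by definition its characteristic line field $\sW=\bra W\ket$ is the unique line field with $[\sW,\sE]\subset\sE$, so that hypothesis is automatic. A characteristic vector field of an even contact structure is nowhere vanishing; hence the assumption that the characteristic foliation is Morse--Smale is exactly the assumption that $W$ is a non-singular Morse--Smale vector field, as required by Theorem~B. Finally, the closed orbits of $W$ are by definition the closed characteristic orbits of $\sE$, so the two indexing families of $\gamma$'s appearing in Theorem~B and in Corollary~D coincide.

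With the dictionary and the hypotheses in place, Theorem~B applies directly: there exists $\sD\subset\sE$ that positively generates $\sE$ if and only if there is some $L\in\bra A,\,B\ket$ with $\maxrot>0$ for every closed orbit $\gamma$ of $W$. Re-reading both sides of this equivalence through the identification of the first paragraph turns it into the statement of Corollary~D, with the closed orbits of $W$ replaced by the closed characteristic orbits of $\sE$ and positive generation replaced by the existence of a positive Engel structure compatible with $\sE$.

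The only point requiring genuine care---more bookkeeping than obstacle---is the matching of signs implicit in the word \emph{positive}. One must check that the orientation that the framing $\bra W,\,L\ket$ induces on $\sD$ agrees with the orientation conventions defining a positive Engel structure, so that it is the inequality $\maxrot>0$, rather than $\maxrot<0$, that corresponds to positive generation of $\sE$. This amounts to tracking a single sign through the definitions of $\rot{L}$ and of positive generation in Section~\ref{SEC_Notation}, and it introduces no new ideas beyond Theorem~B.
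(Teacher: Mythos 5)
Your proposal is correct and takes essentially the same route as the paper: the paper states Corollary~D as an immediate consequence of Theorem~B (Theorem~5.3), relying on the identification from Section~2 that an orientable Engel structure compatible with $\sE$ is precisely a plane field $\sD_L=\bra W,\,L\ket$ with $[\sD_L,\sD_L]=\sE$, since the characteristic foliation $\sW$ of $\sE=[\sD,\sD]$ necessarily lies in $\sD$. Your explicit verification of the hypotheses of Theorem~B and the sign bookkeeping for ``positive'' are exactly the details the paper leaves implicit.
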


\subsection{Structure of the paper}
In Section~\ref{SEC_BasicNotions} we recall basic facts of the theory of Engel structures and even contact structures.
Moreover we introduce the concept of maximally non-integrable $2$-plane field within a $3$-distribution.
In Section~\ref{SEC_Notation} we introduce the family of plane fields and homotopies we are interested in, and we relate non-integrability of $\bra W,\, L\ket$ to how $L$ ``rotates around the flow of $W$''.

In Section~\ref{SEC_RotationNumber} we consider points $p\in M$ contained in a closed orbit $\gamma$ of $\sW$.
We introduce the notion of rotation number along $\gamma$ and of maximal rotation number.
We point out how this is related to the existence of a plane field $\sD$ which generates $\sE$.
The behaviour of the rotation number under homotopies depends on the type of the closed orbit $\gamma$.
This is the content of Section~\ref{SEC_CharacterOfWOrbits}.

In Section~\ref{SEC_MSVF} we apply the theory to Morse-Smale vector fields.
These induce a round-handle decomposition of $M$ that we can use to find a necessary and sufficient condition for $\sE$ to admit $\sD$ generating it in the case where $W$ is NMS .
In Sections~\ref{SEC_MSLegendrian} and~\ref{SEC_MSEngel} we specialize the results obtained to the case of Legendrian vector fields on $3$-manifolds and Morse-Smale even contact structures.

\textbf{Acknowledgements:}
I would like to thank my advisors Prof. Gianluca Bande and Prof. Dieter Kotschick.
The intuition driving the definition of the rotation number was pointed out to me during the problem sessions at the AIM workshop \emph{Engel structures} in San Jos\'e, April 2017.
For this I am particularly thankful to Prof. Yakov Eliashberg.
Finally I thank Prof. Thomas Vogel and Prof. Vincent Colin for their useful feedback, Rui Coelho, and Giovanni Placini for having the patience to listen to me all the time.

\section{Basic notions}\label{SEC_BasicNotions}
In what follows all manifolds are closed and smooth, and all distributions are smooth, if not otherwise stated.

An \emph{even contact structure} $\sE$ is a maximally non-integrable hyperplane distribution on an even dimensional manifold.
Otherwise said $\dim M=2n+2$ and locally $\sE$ is the kernel of a 1-form $\sE=\ker\alpha$ satisfying $\alpha\wedge d\alpha^{2n}\ne0$.
For dimensional reasons if $\sE$ is even contact then there exists a unique line field $\sW$ such that $\sW\subset\sE$ and $[\sW,\sE]\subset\sE$.
We call $\sW$ the \emph{characteristic foliation} of $\sE$.
The existence of this line field implies that if $M$ admits an even contact structure then $\chi(M)=0$.
In \cite{mcduff} a complete h-principle for even contact structures is proved.

An \emph{Engel structure} $\sD$ is a $2$-plane field on a $4$-manifold $M$ such that $\sE:=[\sD,\sD]$ is an even contact structure.
One can see that the characteristic foliation $\sW$ of $\sE=[\sD,\sD]$ must satisfy $\sW\subset\sD$.
The flag of distributions $\sW\subset\sD\subset\sE$ is called the \emph{Engel flag of} $\sD$, and its existence gives strong constraints on the topology of the manifold $M$.
Indeed maximal non-integrability ensures that
\begin{equation}\label{EQN_IsoOfDetBundles}
	\det(\sE/\sW)\cong\det(TM/\sE)\qquad\textup{and}\qquad\det(\sE/\sD)\cong\det(\sD),
\end{equation}
this and the existence of the Engel flag in turn imply that $M$ admits a parallelizable $4$-cover.
The study of the space of Engel structures on parallelizable $4$-manifolds is the subject of fruitful research \cite{cp3,overtwistedEngel,vogelExistence}.

Let $\sE$ be an even contact structures on a $4$-manifold $M$, we say that an Engel structure $\sD$ on $M$ is \emph{compatible} with $\sE$ if $[\sD,\sD]=\sE$.
Otherwise said $\sD$ is compatible with $\sE$ if the latter is its induced even contact structure.
The motivation behind this paper was to understand when, for a given even contact structure $(M,\sE)$ on $4$-manifold, there exists a compatible Engel structure.
There are some obvious topological obstructions to the existence of a compatible Engel structure given by the fact that $M$ must admit a parallelizable $4$-cover.
In order to avoid these cases we suppose that $\sE$ is trivial as a bundle and admits a global framing $\sE=\bra W,\, A,\, B\ket$, where $W$ spans the characteristic foliation $\sW$.
Equation~\eqref{EQN_IsoOfDetBundles} ensures that $M$ is orientable, so that $\sE$ is co-orientable and $TM$ is trivial.
Now there exists an orientable Engel structure $\sD$ compatible with $\sE$ if and only if there exists a vector field $L\in\bra A,\, B\ket$ such that $\sD=\bra W,\,L\ket$ satisfies $[\sD,\sD]=\sE$.

Notice that the same formalism can be used to describe orientable contact structures on $3$-manifolds which are trivial as bundles.
Namely let $M$ be an orientable $3$-manifold and suppose that $W$ is a non-singular vector field such that $TM=\bra W,\, A,\, B\ket$.
Then $W$ is tangent to an orientable contact structure $\sD$ if and only if there exists a vector field $L\in\bra A,\, B\ket$ such that $\sD=\bra W,\,L\ket$ satisfies $[\sD,\sD]=TM$.

In order to treat all these cases at once we give the following
\begin{definition}
 Let $M^n$ be a manifold of dimension $n$, and let $\sE\subset TM$ be a rank $3$ distribution.
 We say that a $2$-plane field $\sD\subset\sE$ \emph{generates} $\sE$ or that $\sD$ is \emph{maximally non-integrable within $\sE$} if $[\sD,\sD]=\sE$.
\end{definition}

If $\sE$ is an even contact structure as above, then Engel structures $\sD$ are maximally non-integrable plane fields within $\sE$.
Similarly if $\sE=TM^3$ as above, then contact structures are maximally non-integrable plane fields within $TM$.
Contact foliations provide another example, indeed suppose that $\sE\subset TM^4$ is an integrable hyperplane distribution on a $4$-manifold, then if $\sD\subset\sE$ generates $\sE$ the leaves of the foliation induced by $\sE$ are (possibly open) contact manifolds.

\section{Rotate without stopping}\label{SEC_Notation}
Suppose that $\sE\subset TM$ is a rank $3$ distribution which admits a global framing $\sE=\bra W,\, A,\, B\ket$ such that the flow of $W$ preserves $\sE$.
Moreover denote by $\sW$ the line field spanned by $W$.
Let $L\in\Gamma\sE$ be never tangent to $\sW$, we want to determine when the distribution $\sD_L:=\bra W,\,L\ket$ is homotopic within $\sE$ to maximally non-integrable plane field within $\sE$.
Notice that, since we have fixed a framing, $\sE$ is oriented, and every $\sD$ that generates $\sE$ also defines and orientation of $\sE$.
\begin{definition}
	Suppose that $\sE=\bra W,\, A,\, B\ket\subset TM$ such that the flow of $W$ preserves $\sE$, let $L\in\Gamma\sE$ be never tangent to $\sW$, and $K\subset M$.
	We say that $\sD_L=\bra W,\,L\ket$ \emph{generates} $\sE$ \emph{on} $K$ \emph{up to homotopy within} $\sE$ if there exists a smooth family of vector fields $L_\tau\in\Gamma\sE$ for $\tau\in[0,1]$ such that $L_\tau$ is never tangent to $W$ for all $\tau\in[0,1]$, $L_0=L$ and $\sD_{L_1}$ generates $\sE$ on a neighbourhood of $K$.
	
	Moreover we say that $\sD_L$ \emph{positively (resp. negatively) generates} $\sE$ \emph{on} $K$ \emph{up to homotopy within} $\sE$ if $\sD_{L_1}$ induces the orientation of $\sE$ (resp. induces the opposite orientation).
\end{definition}

\begin{remark}\label{REM_Homotopies}
 Notice that we do not consider all possible homotopies of the plane field $\sD\subset\sE$ but only those tangent to $\sW$.
 This will be enough in order to answer the question about existence of $\sD$.
\end{remark}

Notice that the framing $\{A,\,B\}$ allows us to identify $L$ with a map $L:M\to S^1$.
It will be useful sometimes to lift this map to the universal cover $\tilde M$ so that the vector field takes the form $L=\cos\psi\ A+\sin\psi\ B$ for some function $\psi:\tilde M\to\R$ making the following diagram commute

\[
\begin{tikzcd}
\tilde M \arrow{r}{\psi} \arrow[swap]{d}{} & \R\ \arrow{d}{} \\%
M \arrow{r}{L}& S^1.
\end{tikzcd}
\]

Consider the $2$-form $\omega$ on $\sE$ given by $\omega(A,B)=1$ and $i_W\omega=0$.
If we define $J\in\pf{End}\sE$ by $JW=0,\ JA=B$ and $JB=-A$, we get
\[
\omega(JX,JY)=\omega(X,Y) 
\]
for all $X,\,Y$ sections of $\sE$ and $\omega(-,J-)$ is positive definite on $\bra A,\,B\ket$.

Denote the flow of $W$ at time $t$ by $\phi_t$, and its tangent map at $p\in M$ by $T_p\phi_t:T_pM\to T_{\phi_t(p)}M$.
Since the flow of $W$ preserves $\sE$ and $i_W\omega=0$, there is a non-vanishing function $\lambda$ which satisfies
\begin{equation}\label{EQN_alphaUnderTheFlow}
(\phi_s^*\omega)_p=\lambda(p;s)\,\omega_p.
\end{equation}

We want to understand the push-forward of $L$ with respect to the flow of $W$.
Denote by
\[
 \tilde L(p;t):=\Big(T_{\phi_{_{t}}(p)}\phi_{-t}\Big)L(\phi_{_{t}}(p))=a(p;t)\ A(p)+b(p;t)\ B(p)+c(p;t)\ W(p).
\]
Since $L$ is never tangent to $\sW$ we have that $\rho(p;t):=\sqrt{a^2(p;t)+b^2(p;t)}$ is everywhere positive.
Moreover there exists a unique function $\theta(p;t)$ smooth in $t$, such that $\theta(p;0)\in[0,2\pi)$ and
\begin{equation}\label{EQN_L(p,t)}
 \tilde L(p;t)=\rho(p;t)\Big(\cos\theta(p;t)\ A(p)+\sin\theta(p;t)\ B(p)\Big)+c(p;t)\ W(p).
\end{equation}

\subsection{What happens if we change time?}\label{SEC_WhatHappensIfWeChangeTime}
We want to understand how $\theta$ varies if we change $t$.
A straightforward calculation gives (we denote the derivative with respect to $t$ with a dot)
\[
 \D{}{t}\tilde L(p;t)=\frac{\dot\rho(p;t)}{\rho(p;t)}\tilde L(p;t)+\dot\theta(p;t)J_p\tilde L(p;t)\qquad \mod \sW(p).
\]
Since $i_W \omega=0$, we get
\begin{equation}\label{EQN_thetadot}
\begin{split}
 	\rho^2(p;t)\dot\theta(p,t)&=\dot\theta(p,t)\ \omega_p\Big(\tilde L(p;t),\,J_p\tilde L(p;t)\Big)\\
 	&=\omega_p\left(\tilde L(p;t),\,\D{}{t}\tilde L(p;t)\right).
\end{split}
\end{equation}
Using the properties of the flow $\phi_t$ we have
\begin{equation}\label{EQN_L(p;t+s)}
	\begin{split}
		\tilde L(p;t+s)&=\Big(T_{\phi_{t+s}(p)}\phi_{-(t+s)}\Big) \Big(L\big(\phi_{t+s}(p)\big)\Big)\\
		&=\Big(T_{\phi_{s}(p)}\phi_{-s}\Big)\circ\Big(T_{\phi_t(\phi_s(p))}\phi_{-t}\Big) \Big(L\big(\phi_{_{t}}(\phi_s(p))\big)\Big)\\
		&=\Big(T_{\phi_{s}(p)}\phi_{-s}\Big)\Big(\tilde L\big(\phi_s(p),t)\Big).
	\end{split}
\end{equation}
Putting together~\eqref{EQN_alphaUnderTheFlow},~\eqref{EQN_thetadot} and~\eqref{EQN_L(p;t+s)} we get 
\begin{equation*}
\begin{split}
	\rho(p;t+s)^2\,\dot \theta(p;t+s)&=\omega_p\left(\tilde L(p;t+s),\,\D{}{t}\tilde L(p;t+s)\right)\\
	&=\omega_p\left(\Big(T_{\phi_{s}(p)}\phi_{-s}\Big)\tilde L\big(\phi_s(p);t),\,\Big(T_{\phi_{s}(p)}\phi_{-s}\Big)\D{}{t}\tilde L\big(\phi_s(p);t)\right)\\
	&=\Big(\phi^*_s\omega_p\Big)\left(\tilde L(\phi_s(p);t),\,\D{}{t}\tilde L(\phi_s(p);t)\right)\\
	&=\lambda(p;s)\,\omega_p\left(\tilde L(\phi_s(p);t),\,\D{}{t}\tilde L(\phi_s(p);t)\right),\\
\end{split}
\end{equation*}
where in the last step we used the fact that both $\tilde L$ and its derivative are in $\sE$.
Finally we get
\begin{equation}\label{EQN_thetadot(t+s)}
\rho(p;t+s)^2\,\dot \theta(p;t+s)=\lambda(p;s)\,\rho(\phi_s(p);t)^2\,\dot\theta(\phi_s(p);t).
\end{equation}
This equation can be used to prove the following 
\begin{proposition}\label{PROP_EngelConditionOnTheta}
	The distribution $\bra W,\,L\ket\subset\sE$ generates $\sE$ on the orbit of $p$ if and only if $\dot\theta(p;s)\ne 0$ for all $s\in\R$.
\begin{proof}
The distribution generates $\sE$ at $p$ if and only if $\bra W,\,L,\,[W,L]\ket_p=\sE_p$ which happens if and only if $L_p$ and $[W,L]_p$ are linearly independent modulo $\sW_p$.
We write
\[
\begin{split}
	[W,L]_p&=(\sL_WL)_p
	=\restrto{\D{}{t}}{t=0}\Big(T_{\phi_{t}(p)}\phi_{-t}\Big)L\big(\phi_{t}(p))
	=\restrto{\D{}{t}}{t=0}\tilde L(p;t)\\
	&=\dot\rho(p;0) L(p)+\dot\theta(p;0) J_pL(p)\quad\mod{\sW_p},
\end{split}
\]
hence $\bra W,\,L\ket$ generates $\sE$ at $p$ if and only if $\dot\theta(p;0)\ne0$.
Using Equation~\eqref{EQN_thetadot(t+s)} we conclude that
\[
	\dot\theta(\phi_s(p);0)=\frac{\rho(t;s)^2}{\lambda(p;s)}\,\dot\theta(p;s),
\]
which means that $\bra W,\,L\ket$ generates $\sE$ at $\phi_s(p)$ if and only if $\dot\theta(p;s)\ne0$.
\end{proof}
\end{proposition}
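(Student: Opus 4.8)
The plan is to first characterise generation of $\sE$ at a \emph{single} point in terms of $\dot\theta$, and then to propagate this characterisation along the entire orbit of $p$ using Equation~\eqref{EQN_thetadot(t+s)}.

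First I would rewrite the generation condition as a statement about the Lie bracket. Since $L$ is never tangent to $\sW$, the vectors $W$ and $L$ are already independent at every point, so $\bra W,\,L\ket$ generates $\sE$ at a point $q$ if and only if $[W,L]_q$ is linearly independent from $L(q)$ modulo $\sW_q$. The crux is to express this bracket through the flow: using $[W,L]=\sL_W L=\restrto{\D{}{t}}{t=0}\tilde L(p;t)$ and differentiating the polar form~\eqref{EQN_L(p,t)} of $\tilde L$, I would find, working modulo $\sW_p$,
\[
 [W,L]_p\equiv\dot\rho(p;0)\,L(p)+\dot\theta(p;0)\,J_pL(p),
\]
where I have used $\rho(p;0)=1$ together with the identity $-\sin\theta\,A+\cos\theta\,B=J(\cos\theta\,A+\sin\theta\,B)$ and the fact that $J$ annihilates the $\sW$-component. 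Since $J_pL(p)$ is the quarter-turn of $L(p)$ inside $\bra A,\,B\ket$, it is independent from $L(p)$ modulo $\sW_p$; hence $\bra W,\,L\ket$ generates $\sE$ at $p$ precisely when $\dot\theta(p;0)\ne0$. The identical argument applied at $q=\phi_s(p)$ shows that generation at $\phi_s(p)$ is equivalent to $\dot\theta(\phi_s(p);0)\ne0$.

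Finally I would transport this pointwise criterion along the orbit. Setting $t=0$ in~\eqref{EQN_thetadot(t+s)} and using $\rho(\phi_s(p);0)=1$ gives
\[
 \dot\theta(\phi_s(p);0)=\frac{\rho(p;s)^2}{\lambda(p;s)}\,\dot\theta(p;s).
\]
As $\rho$ is everywhere positive and $\lambda$ never vanishes, $\dot\theta(\phi_s(p);0)\ne0$ if and only if $\dot\theta(p;s)\ne0$. Letting $s$ range over $\R$, the distribution $\bra W,\,L\ket$ generates $\sE$ at every point of the orbit of $p$ if and only if $\dot\theta(p;s)\ne0$ for all $s\in\R$, which is the assertion.

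The main obstacle is the bracket computation: one must correctly interpret $\sL_W L$ as the $t$-derivative at $0$ of the backward push-forward $\tilde L(p;t)$, and then isolate the coefficient of $J_pL(p)$ after discarding the $\sW$-direction. The point that makes everything work is that differentiating the rotating frame $\cos\theta\,A+\sin\theta\,B$ produces exactly $\dot\theta$ times its $J$-image, so that the sole obstruction to independence is the single scalar $\dot\theta(p;0)$; once this is secured, the transport step is a routine application of~\eqref{EQN_thetadot(t+s)}.
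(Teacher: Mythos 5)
Your proposal is correct and takes essentially the same route as the paper's proof: the identical bracket computation $[W,L]_p\equiv\dot\rho\,L(p)+\dot\theta(p;0)\,J_pL(p)\bmod\sW_p$ yielding the pointwise criterion $\dot\theta(p;0)\ne0$, followed by transport along the orbit via Equation~\eqref{EQN_thetadot(t+s)} at $t=0$. Your normalization $\rho(\cdot\,;0)=1$ is not actually granted by the setup, but it is harmless since $\rho$ is everywhere positive and only the nonvanishing of the coefficient of $J_pL(p)$ matters; indeed your transported identity (with $\rho(p;s)$) silently corrects the typo $\rho(t;s)$ in the paper's displayed formula.
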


\begin{remark}
      Notice that $\dot\theta(p;s)$ is positive (resp. negative) if the orientation of $\sE$ induced by $\bra W,\,L\ket$ is the same as (resp. different from) the one induced by $\bra W,\,A,\,B\ket$.
\end{remark}

\section{Rotation number}\label{SEC_RotationNumber}
We can interpret Proposition~\ref{PROP_EngelConditionOnTheta} geometrically by saying that $\sD=\bra W,\,L\ket$ generates $\sE$ if and only if $L$ ``rotates without stopping around the flow of $W$''.
With this idea in mind we give the following
\begin{definition}
	Let $\sE=\bra W,\,A,\,B\ket$ be as above and let $\gamma\subset M$ be a closed orbit of $W$ of period $T$.
	For a given $p\in\gamma$, we call \emph{rotation number of $L$ around $\gamma$ at $p$} the quantity
	\[
	\rot{L}=\theta(p;T)-\theta(p;0).
	\]
\end{definition}

\begin{remark}\label{REM_RescaleL}
  Notice that in general the rotation number is not an integer and it does not depend on the choice of $\{A(p),\ B(p)\}$.
  Moreover positively rescaling $W$ and $L$ also does not change it.
\end{remark}

The rotation number is not invariant under homotopies of $L$, as the following example shows.

\begin{example}\label{EXM_RotNotInvariant}
	The Lie algebra $\fg$ of the Lie group $\pf{Sol}_1^4$ is generated by $\{W,\,X,\,Y,\,Z\}$ satisfying
	\[
	 [W,X]=-X,\quad[W,Y]=Y,\quad [X,Y]=Z,
	\]
	and all other brackets are zero.
	This implies that we have left-invariant Engel structure given by $\sD=\bra W,\,X+Y\ket$ (see \cite{vogelGeometricEngel} for more details).
	
	Consider the left-invariant even contact structure $\bra W,\,X,\,Y\ket$, by construction the characteristic foliation is spanned by $W$, and its flow preserves $\bra X\ket$ and $\bra Y\ket$.
	This means that for each compact quotient $\pf{Sol}_1^4/\Gamma$ such that $W$ admits a closed orbit $\gamma$, we have $\rot{X}=0$ since $\theta$ is constant.
	On the other hand $L_s=X+sY$ gives a homotopy $\sD_s=\bra W,\,L_s\ket$ between $\sD_0=\bra W,\,X\ket$ and $\sD_1=\bra W,\,X+Y\ket$, which is an Engel structure.
	In particular $\rot{X+Y}\ne0$ by Proposition~\ref{PROP_EngelConditionOnTheta}.
\end{example}

We have invariance under a smaller family of homotopies of $L$.
\begin{lemma}\label{LEM_HomInvarianceRot}
	Let $L_\tau$ for $\tau\in[0,1]$ be a smooth family of vector fields tangent to $\sE=\bra W,\,A,\,B\ket$ and nowhere tangent to $\sW$.
	If $L_\tau(p)=L_0(p)$ for all $\tau\in[0,1]$ then 
	\[
		\rot{L_1}=\rot{L_0}.
	\]
	
\begin{proof}
	Using~\eqref{EQN_L(p;t+s)} with $t=0$ and $s=T$ we get
	\[
		\tilde L_\tau(p;T)=\Big(T_{\phi_{_{T}}(p)}\phi_{-T}\Big)\tilde L_\tau\big(\phi_{_{T}}(p);0)=\Big(T_p\phi_{-T}\Big) L_\tau(p).
	\]
	By a calculation similar to the one performed in~\eqref{EQN_thetadot(t+s)}, we get
	\[
	\begin{split}
	 	\rho_\tau(p;T)^2\D{}{\tau}\theta_\tau(p;T)&=
	 	\omega_p\left(\tilde L_\tau(p;T),\,\D{}{\tau}\tilde L_\tau(p;T)\right)\\
	 	&=\phi_{_{T}}^*\omega_p\left(L_\tau(p),\,\D{}{\tau}L_\tau(p)\right)\\
	 	&=\lambda(p;T)\D{}{\tau}\theta_\tau(p;0)=0.
	\end{split}
	\]
	This readily implies that $\rot{L_\tau}$ is constant in $\tau$.
\end{proof}
\end{lemma}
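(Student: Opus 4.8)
The plan is to write $\rot{L_\tau}=\theta_\tau(p;T)-\theta_\tau(p;0)$ and to show that both angles are constant in $\tau$; the difference is then constant, and evaluating at $\tau=0$ and $\tau=1$ yields the claim. The entire argument hinges on the closedness of $\gamma$, which gives $\phi_T(p)=p$, together with the hypothesis that the homotopy is frozen at $p$, that is $L_\tau(p)=L_0(p)$ and hence $\D{}{\tau}L_\tau(p)=0$.

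First I would dispose of the initial angle. At $t=0$ we have $\tilde L_\tau(p;0)=L_\tau(p)=L_0(p)$, so by~\eqref{EQN_L(p,t)} the number $\theta_\tau(p;0)$ is the unique representative in $[0,2\pi)$ of the direction of the fixed vector $L_0(p)$ in the frame $\{A(p),\,B(p)\}$; this is manifestly independent of $\tau$. The substance lies in the terminal angle $\theta_\tau(p;T)$. Here I would apply~\eqref{EQN_L(p;t+s)} with $t=0$ and $s=T$, using $\phi_T(p)=p$, to obtain $\tilde L_\tau(p;T)=\big(T_p\phi_{-T}\big)L_\tau(p)=\big(T_p\phi_{-T}\big)L_0(p)$. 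Differentiating in $\tau$ and using $\D{}{\tau}L_\tau(p)=0$ gives $\D{}{\tau}\tilde L_\tau(p;T)=0$. I would then rerun the computation that produced~\eqref{EQN_thetadot}, but differentiating in $\tau$ instead of $t$: pairing $\tilde L_\tau(p;T)$ against $\omega_p$ annihilates the $\sW$-component (since $i_W\omega=0$) and isolates the angular part, yielding $\rho_\tau(p;T)^2\,\D{}{\tau}\theta_\tau(p;T)=\omega_p\big(\tilde L_\tau(p;T),\,\D{}{\tau}\tilde L_\tau(p;T)\big)=0$. As $\rho_\tau(p;T)>0$, this forces $\D{}{\tau}\theta_\tau(p;T)=0$.

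The one point that deserves care, and which is really the heart of the lemma, is that it does \emph{not} suffice to observe that the terminal vectors $\tilde L_\tau(p;T)$ coincide for all $\tau$. The rotation number records a continuous real lift of an $S^1$-valued angle, so a priori $\theta_\tau(p;T)$ could jump by integer multiples of $2\pi$ as $\tau$ varies, reflecting different winding of the intermediate family $L_\tau$ along $\gamma$. The reason this cannot occur is precisely that $\theta_\tau(p;T)$ is smooth in $\tau$, by smoothness of the flow and of the family $L_\tau$, with vanishing derivative; it is therefore genuinely constant, not merely constant modulo $2\pi$. This is why I phrase the argument through the derivative identity rather than through the bare coincidence of the vectors. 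Combining the two steps, $\rot{L_\tau}$ is constant in $\tau$, and in particular $\rot{L_1}=\rot{L_0}$.
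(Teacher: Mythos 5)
Your proposal is correct and follows essentially the same route as the paper: both apply~\eqref{EQN_L(p;t+s)} with $t=0$, $s=T$ on the closed orbit and then differentiate the angle in $\tau$ via the $\omega_p$-pairing as in~\eqref{EQN_thetadot}, concluding from smoothness of the lift that $\rot{L_\tau}$ is constant. Your version is marginally more direct in that you observe $\D{}{\tau}\tilde L_\tau(p;T)=0$ outright (since $\tilde L_\tau(p;T)=\big(T_p\phi_{-T}\big)L_0(p)$ is independent of $\tau$), where the paper routes the same vanishing through $\phi_T^*\omega_p$ and the factor $\lambda(p;T)$; your explicit remark that the derivative identity rules out jumps of the real lift by multiples of $2\pi$ is a point the paper leaves implicit.
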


The previous result suggests to take into account all possible ``initial phases''.
More precisely recall that $L$ tangent to $\bra A,\,B\ket$ can be identified with a map $L:M\to S^1$.
Now take $\eta\in\R$ and denote by $R(\eta)$ the rotation  of $S^1$ of angle $\eta$.
The idea is to consider the rotation number of all vector fields obtained via $R(\eta)\circ L$ for $\eta\in\R$.
We define
\begin{equation}
	\Phi^L_{\gamma,p}:\R\to\R\spacedt{s.t.}\eta\mapsto \rot{R(\eta)\circ L}.
\end{equation}
The \emph{maximal rotation number of $L$ along $\gamma$ at $p$} is
\[
	\maxrot=\max_\eta\Big(\Phi^L_{\gamma,p}(\eta)\Big).
\]
This object detects whether $\sD_L=\bra W,\,L\ket$ generates $\sE$ on $\gamma$ up to homotopy within $\sE$.

\begin{theorem}\label{THM_IFFforHomotMaxNonIntegrable}
	Let $\sE=\bra W,\,A,\,B\ket$ be a distribution of rank $3$ such that $[\sW,\sE]\subset\sE$, and let $\gamma$ a closed orbit for $W$.
	Then $\sD_L=\bra W,\,L\ket$ generates $\sE$ on $\gamma$ up to homotopy within $\sE$ if and only if there exists a point $p\in\gamma$ such that $|\maxrot|>0$.

\begin{proof}
	 Suppose first that $\sD_L=\bra W,\,L\ket$ generates $\sE$ up to homotopy on $\gamma$, and let $L_\tau$ for $\tau\in[0,1]$ be a homotopy such that $\bra W,\,L_1\ket$ is maximally non-integrable within $\sE$.
	 We need to show that for a given $p\in\gamma$, there is a homotopy relative to $L_1(p)$ between $L_1$ and $R(\eta)\circ L$ for some $\eta\in\R$.
	 This implies indeed by Lemma~\ref{LEM_HomInvarianceRot} and Proposition~\ref{PROP_EngelConditionOnTheta} that 
	 \[
	    \Phi^L_{\gamma,p}(\eta)=\rot{L_1}\ne0.
	 \]
	 For a fixed $p\in M$ there is an angle $\eta$ such that $R(\eta)\circ L(p)=L_1(p)$.
	 On $\Op(p)$ we can homotope $R(\eta)\circ L$ to $L_1$ relative to $\{p\}$ (here we possibly need to rescale the vector, as in Remark~\ref{REM_RescaleL}).
	 Since both $R(\eta)\circ L$ and $L_1$ are homotopic to $L$, they must be homotopic to each other, and since the fundamental group of $S^1$ is Abelian, there is a homotopy relative to $\{p\}$.

 	Conversely suppose $|\maxrot|>0$.
 	Without loss of generality we can suppose that $\rot{L}>0$.
 	First consider a small neighbourhood of $p$ and homotope $L$ relative to $\{p\}$ and to the boundary of $\Op(p)$ to a maximally non-integrable distribution within $\sE$ near $p$.
 	Since this homotopy is relative to $\{p\}$, the rotation number does not change by Lemma~\ref{LEM_HomInvarianceRot}.
 	
 	For $\eps>0$ small, take a disc $D^3\imm M$ centered at $\phi_\eps(p)$ and everywhere transverse to $\sW$.
 	Up to shrinking the disc $D^3$, we can suppose that the map $F:D^3\times[\eps,T-\eps]\to M$ given by the flow $(q,t)\mapsto\phi_t(q)$ is an embedding and hence a flow box for $W$.
 	In this chart we have
 	\[
 		F^*\tilde L(q;t)=\rho(q;t)\Big(\cos\psi(q;t)\ F^*A(p)+\sin\psi(q;t)\ F^*B(p)\Big)
 	\]
 	with $\psi(0;t)=\theta(\phi_\eps(p);t)$ on $\gamma$.
 	Since $\rot{L}>0$, up to choosing $\eps>0$ small enough, we can suppose that $\psi(0;T-\eps)-\psi(0;\eps)>0$.
 	Hence there exists a homotopy $\psi_\tau:D^3\times[\eps,T-\eps]\to\R$ such that $\psi_0=\psi$, the restriction of $\psi_\tau$ to the boundary $\partial(D^3\times[\eps,T-\eps])$ is $\psi$, and 
 	\[
 		\psi_1(0;t)=h(t)\Big(\psi(0;T-\eps)-\psi(0;\eps)\Big)+\psi(0;\eps)
	\]
	for a smooth step function $h$ (see Figure~\ref{FIG_HomotTheta}).
 	Then $\sD_{L_1}$ generates $\sE$ on a (possibly smaller) neighbourhood of $\gamma$.
\end{proof}
\end{theorem}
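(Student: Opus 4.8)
The plan is to reduce, via Proposition~\ref{PROP_EngelConditionOnTheta}, the generation property along $\gamma$ to the statement that the angle function $\theta(p;\cdot)$ is strictly monotone, and then to control this monotonicity under homotopies using Lemma~\ref{LEM_HomInvarianceRot}. The rotation number $\rot{L}=\theta(p;T)-\theta(p;0)$ is exactly the total angle swept, so the whole proof hinges on translating ``monotone $\theta$'' into ``nonzero total angle, realized after a rel-basepoint homotopy''.

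For the forward implication I would assume $\sD_L$ generates $\sE$ on $\gamma$ up to homotopy, with $L_\tau$ a homotopy such that $\sD_{L_1}$ is maximally non-integrable within $\sE$, and fix $p\in\gamma$. Since $L_1$ is nowhere tangent to $\sW$, I choose $\eta\in\R$ with $R(\eta)\circ L(p)=L_1(p)$ (rescaling if needed, which is harmless by Remark~\ref{REM_RescaleL}). The key step is to promote the \emph{free} homotopy between $R(\eta)\circ L$ and $L_1$ — both being homotopic to $L$, hence to each other — into a homotopy \emph{relative to} $\{p\}$; this is legitimate because, viewing the $\bra A,\,B\ket$-part of a vector field as a map $M\to S^1$ and using that $\pi_1(S^1)$ is abelian so the change-of-basepoint action is trivial, two freely homotopic maps agreeing at $p$ are homotopic rel $\{p\}$. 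Lemma~\ref{LEM_HomInvarianceRot} then yields $\Phi^L_{\gamma,p}(\eta)=\rot{R(\eta)\circ L}=\rot{L_1}$, and since $\sD_{L_1}$ generates $\sE$ on $\gamma$, Proposition~\ref{PROP_EngelConditionOnTheta} forces $\dot\theta\neq0$ along the whole orbit, so $\theta(p;\cdot)$ is strictly monotone and $\rot{L_1}\neq0$. Hence $|\maxrot|\ge|\Phi^L_{\gamma,p}(\eta)|>0$.

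For the converse I would assume $|\maxrot|>0$; replacing $L$ by $R(\eta)\circ L$ for a maximizing phase $\eta$ (a homotopy within $\sE$) and possibly reversing the orientation of $\sE$, I may assume $\rot{L}>0$. The goal is to produce, through a homotopy of $L$ tangent to $\sW$, an angle function strictly increasing along $\gamma$. First, on a small ball $\Op(p)$ I homotope $L$ rel $\{p\}$ and rel $\partial\Op(p)$ to a field generating $\sE$ near $p$; by Lemma~\ref{LEM_HomInvarianceRot} this leaves $\rot{L}$ unchanged. For small $\eps>0$ I pass to a flow box $F\colon D^3\times[\eps,T-\eps]\to M$ along $\gamma$, in which $\tilde L$ has angle $\psi(q;t)$ with $\psi(0;t)=\theta(\phi_\eps(p);t)$; since $\rot{L}>0$ and the contributions over $[0,\eps]$ and $[T-\eps,T]$ are as small as desired, I arrange $\psi(0;T-\eps)-\psi(0;\eps)>0$. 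The core construction is a homotopy $\psi_\tau$ on the flow box, fixed on $\partial(D^3\times[\eps,T-\eps])$, deforming $\psi$ to a $\psi_1$ that is strictly increasing in $t$ along $q=0$; pushing the corresponding $\tilde L_\tau$ forward by the flow realizes this as an honest homotopy of $L$ equal to $L$ outside the flow box, hence extending globally, after which $\dot\theta>0$ on $\gamma$ and Proposition~\ref{PROP_EngelConditionOnTheta} gives generation on a neighbourhood of $\gamma$.

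The main obstacle is this core construction in the converse: converting mere positivity of the total angle change into a homotopy of $L$ \emph{within} $\sE$ whose angle function is genuinely strictly monotone, while keeping the flow-box boundary fixed so the deformation glues to the unchanged field outside. The delicate points I expect to have to check are (i) that prescribing $\psi_1$ in flow-box coordinates corresponds to an admissible homotopy of $L$ that remains nowhere tangent to $\sW$, and (ii) that the local adjustment near $p$ and the redistribution of rotation over $[\eps,T-\eps]$ are compatible, so that $\dot\theta>0$ holds along the \emph{entire} closed orbit and not merely after integration.
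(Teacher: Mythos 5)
Your proposal is correct and takes essentially the same route as the paper's own proof: the forward direction by matching $R(\eta)\circ L$ to $L_1$ at $p$, upgrading the free homotopy to one rel $\{p\}$ via the abelianness of $\pi_1(S^1)$, and then invoking Lemma~\ref{LEM_HomInvarianceRot} and Proposition~\ref{PROP_EngelConditionOnTheta}; the converse by a flow-box deformation of the angle function, fixed on the boundary, redistributing the positive total rotation into a strictly increasing $\psi_1$. The delicate points you flag at the end are precisely the ones the paper resolves with the smooth step function $h$ and the rel-boundary condition, so nothing essential is missing.
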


\begin{figure}
      \centering
	\includegraphics[width=\columnwidth]{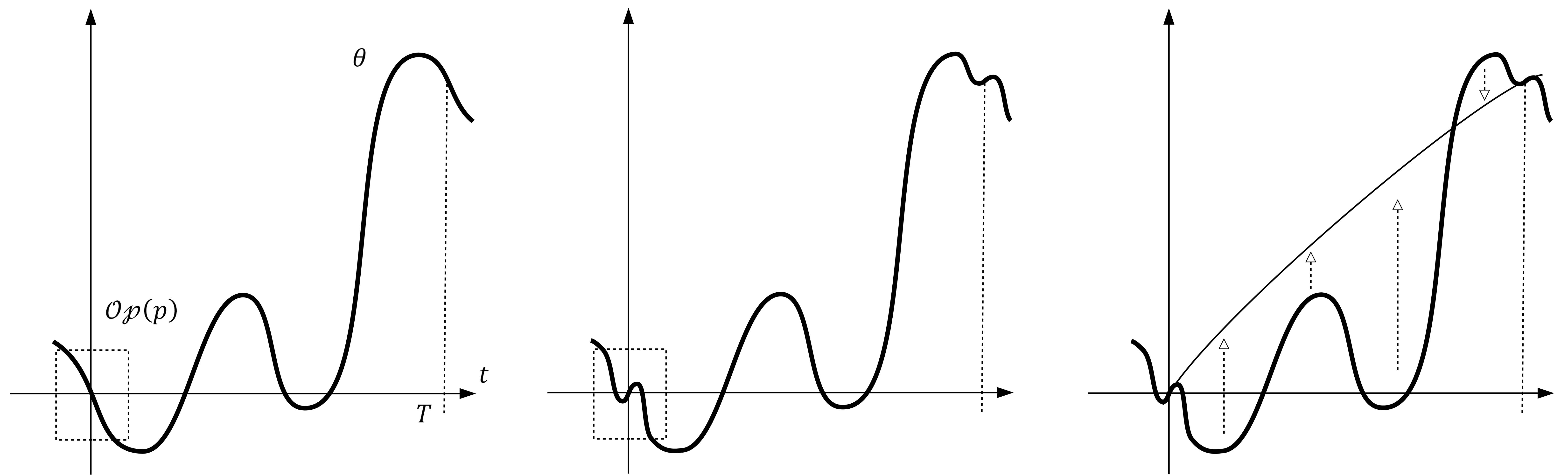}
	\caption{Homotopy of $\theta$ when the rotation number is positive.}
	\label{FIG_HomotTheta}
\end{figure}

\section{Character of closed orbits of $\sW$}\label{SEC_CharacterOfWOrbits}

We now consider the action of the flow of $W$ on $\sE/\sW$.
This is discussed in details in \cite{mitsu} for the case of Engel structures.

For a fixed section $\sW=\bra W\ket$, we have the action of the associated flow $\phi_t$ on $\sE$ and this induces a map $[T\phi_t]:\PP(\sE_p/\sW_p)\to\PP(\sE_{\phi_t(p)}/\sW_{\phi_t(p)})$.
If $p\in M$ is contained in a closed orbit of $\sW$ of period $T$, then $P:=[T_p\phi_T]\in\pf{PSL}(2,\R)$, where we identify $\R\PP^1=\PP(\sE_p/\sW_p)$.

Hence the usual classification of such maps we say that a closed orbit $\gamma$ is
\begin{itemize}
	\item \emph{Elliptic} if $|\pf{tr}{P}|<2$ or $P=\pm id$, in this case we can represent $P$ by a matrix of the form
	\[
	P\equiv\pm\Matrix{\cos\delta&\sin\delta\\-\sin\delta&\cos\delta}
	\]
	for some $\delta\in\R$.
	\item \emph{Parabolic} if $|\pf{tr}{P}|=2$ and $P\ne\pm id$, in this case we can represent $P$ by a matrix of the form
	\[
	P\equiv\pm\Matrix{1&\pm1\\0&1}.
	\]
	\item \emph{Hyperbolic} if $|\pf{tr}{P}|>2$, in this case we can represent $P$ by a matrix of the form
	\[
	P\equiv\pm\Matrix{e^{-\mu}&0\\0&e^{\mu}}
	\]
	for some $\mu\in\R$.
\end{itemize}
Notice that parabolic orbits come in two types depending on the sign of the entry over the diagonal.
We will call \emph{positive parabolic} orbits where this element has sign opposite to the one of the diagonal entries, and negative otherwise.

The following result analyses how $\Phi^L_{\gamma,p}(\eta)$ changes when we change $\eta$.
\begin{proposition}\label{PROP_RotForTypeOfClosedOrbits}
    Let $\sE=\bra W,\,A,\,B\ket$ be a distribution of rank $3$ such that $[\sW,\sE]\subset\sE$, let $\gamma$ be a closed orbit for $W$, and let $p$ be a point on $\gamma$.
    \begin{enumerate}
    	\item If $\gamma$ is hyperbolic, then for every $\eta\in\R$ we have 
    	\[
    		\left|\Phi^L_{\gamma,p}(\eta)-\rot{L}\right|<\frac{\pi}{2}.
    	\]
    	Moreover there exists a constant $c\in(0,\, \pi/2)$ such that $\sD_L=\bra W,\,L\ket$ positively generates $\sE$ on $\gamma$ up to homotopy if and only if $\rot{L}>-c$.
    	
    	\item If $\gamma$ is positive parabolic, then for every $\eta\in\R$ we have 
    	\[
    	\left|\Phi^L_{\gamma,p}(\eta)-\rot{L}\right|<\pi.
    	\]
    	Moreover there exists a constant $c\in(0,\, \pi)$ such that $\sD_L=\bra W,\,L\ket$ positively generates $\sE$ on $\gamma$ up to homotopy if and only if $\rot{L}>-c$.
    	
    	\item If $\gamma$ is negative parabolic then $\sD_L=\bra W,\,L\ket$ positively generates $\sE$ on $\gamma$ up to homotopy if and only if $\rot{L}>0$.
    	
    	\item If $\gamma$ is elliptic then $\Phi^L_{\gamma,p}(\eta)$ is constant in $\eta$.
	\end{enumerate}
    \begin{proof}
      We use the notation
      \[
      \Big(T_{\phi_t(p)}\phi_{-t}\Big)\Big(R(\eta)\circ L(\phi_t(p))\Big)=\rho_\eta(p;t)\Big(\cos\theta_\eta(p;t)\ A(p)+\sin\theta_\eta(p;t)\ B(p)\Big)
      \]
      modulo $\sW(p)$ where $\theta_\eta(p;0)=\theta(p;0)+\eta$.
      We need to determine $\theta_\eta(p;T)$.
      Set
      \[
    		M_\eta(t):=\Big(T_{\phi_t(p)}\phi_{-t}\Big)R(\eta) \Big(T_{\phi_t(p)}\phi_{-t}\Big)^{-1},
    	\]
		so that
      \begin{align*}
            \tilde L_\eta(p;T)&=\Big(T_{p}\phi\su{-T}\Big)\Big(R(\eta)\circ L(p)\Big)=M_\eta(T)\Big(T_{p}\phi\su{-T}(L(p))\Big)\\
            &=\rho(p;T)M_\eta(T)\Big(\cos\theta(p;T)\,A(p)+\sin\theta(p;T)\,B(p)\Big)
      \end{align*}
      modulo $\sW(p)$.
      There is a function $r=r(\eta,\theta)$ which depends on $M_\eta(T)$ and on the angle $\theta(p;T)$ such that
      \[
      \tilde L_\eta(p;T)=\tilde\rho(p;T)\Big(\cos(\theta(p;T)+r)\ A(p)+\sin(\theta(p;T)+r)\ B(p)\Big)
      \]
      modulo $\sW(p)$.
      Moreover since $M_\eta(0)=R(\eta)$ and $M_\eta(t)$ is continuous we conclude that $\theta_\eta(p;T)=\theta(p;T)+r$.
      We will discuss what the angular displacement $r$ is in the various cases.
      
      Since the only term in Equation~\ref{EQN_L(p,t)} that is used to calculate $\rot{L}$ is $\theta$, it suffices to consider the conformal class of the map $T_p\phi_t$.
      Take $P=\lambda\, T_{p}\phi\su{-T}$ where $\lambda$ is a constant such that $\det P=1$.
      
      If $\gamma$ is hyperbolic, using Remark~\ref{REM_RescaleL} we can choose $A(p)$ and $B(p)$ so that $P(A(p))=e^{-\mu}\, A(p)$ and $P(B(p))=e^\mu\, B(p)$, so that in this basis
      \[
      	P=\Matrix{e^{-\mu}&0\\0&e^{\mu}}.
      \]
      Moreover up to changing initial phase $L(p)=A(p)$.
      This means that $r=\eta+\eps$ where $|\eps|<\pi/2$, since $P$ displaces a point $q\in S^1$ of an angle of at most $\pi/2$.
      Now
      \[
      	\Phi^L_{\gamma,p}(\eta)=\theta_\eta(p;T)-\theta_\eta(p;0)=\theta(p;T)+\eta+\eps-\theta(p;0)-\eta=\rot{L}+\eps
      \]
      and it suffices to set $c=|\min_\eta(\eps(\eta))|$.
      
      If $\gamma$ is positive parabolic, we can suppose as above that $L(p)=A(p)$ and that $P$ in the basis $\{A(p),\,B(p)\}$ takes the form
      \[
      	P=\Matrix{1&-1\\0&1}.
      \]
      As above $r=\eta+\eps$ with $0\le\eps<\pi$ and again we set $c=|\min_\eta(\eps(\eta))|$.
      On the other hand if $\gamma$ is negative parabolic then by changing $\eta$ we can only decrease the rotation number, i.e. $r=\eta-\eps$ with $\eps$ as above.
      
      Finally if $\gamma$ is elliptic for any choice of $\{A(p),\,B(p)\}$ we have
      \[
      	P=\Matrix{\cos\delta&\sin\delta\\-\sin\delta&\cos\delta}=R(\delta),
      \]
      so that $M_\eta(T)=R(\delta) R(\eta) R(\delta)^{-1}=R(\eta)$ and $\theta_\eta(p;T)=\theta(p;T)+\eta$.
      
    \end{proof}
\end{proposition}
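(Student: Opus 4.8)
The plan is to reduce the whole proposition to a single one-parameter computation and then read off the four cases from the normal form of the return map. Following the setup already fixed in the statement, I write $\theta_\eta(p;T)=\theta(p;T)+r(\eta)$, where $r(\eta)$ is the continuous lift of the angular displacement induced by $M_\eta(T)$, normalised by $r(0)=0$ (legitimate since $M_0(T)=\mathrm{id}$). Because $\theta_\eta(p;0)=\theta(p;0)+\eta$, this yields the master identity
\[
\Phi^L_{\gamma,p}(\eta)-\rot{L}=r(\eta)-\eta=:\eps(\eta),
\]
so the entire statement becomes an assertion about the correction term $\eps$. Since only the conformal class of the linearised flow enters $\theta$, I replace $T_p\phi_{-T}$ by its normalisation $P\in\pf{PSL}(2,\R)$ with $\det P=1$; the conformal factor cancels and $M_\eta(T)$ then acts on directions exactly as the conjugate $PR(\eta)P^{-1}$. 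Conjugacy forces $M_\eta(T)$ to have the same type as $R(\eta)$, and it forces $r$ to wind once as $\eta$ winds once, which is precisely why $r(\eta)=\eta+\eps(\eta)$ with $\eps$ bounded and periodic.

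Next I dispatch the four cases by choosing $A(p),B(p)$ adapted to the eigenstructure of $P$ (allowed by Remark~\ref{REM_RescaleL}, as $\rot{L}$ is framing-independent) and, in the non-elliptic cases, aligning the initial phase so that $L(p)=A(p)$ is a fixed direction of $P$. With this alignment $P^{-1}$ fixes the tracked direction, $R(\eta)$ carries it to angle $\eta$, and $P$ then moves it, so $\eps(\eta)$ is \emph{exactly} the angular displacement of the direction at angle $\eta$ under the single map $P$. The elliptic case needs no alignment: $P=R(\delta)$ commutes with every $R(\eta)$, hence $M_\eta(T)=R(\eta)$, $\eps\equiv 0$, and $\Phi^L_{\gamma,p}$ is constant, which is (4). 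For $P=\mathrm{diag}(e^{-\mu},e^{\mu})$ the displacement obeys $\tan(\eta+\eps)=e^{2\mu}\tan\eta$, whose extrema occur at $\tan\eta=\pm e^{-\mu}$ with values $\pm\!\left(\tfrac{\pi}{2}-2\arctan e^{-\mu}\right)$; thus $|\eps|<\pi/2$ and $\eps$ takes both signs, giving (1). For the parabolic normal forms $P=\Matrix{1&\pm1\\0&1}$ the map $P$ fixes $A(p)$ and pushes every other direction monotonically to one side by an amount tending to but never reaching $\pi$; hence $|\eps|<\pi$ and $\eps$ is one-signed, the two signs matching the positive/negative distinction recorded in Section~\ref{SEC_CharacterOfWOrbits}.

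It remains to extract the ``moreover'' clauses. By Theorem~\ref{THM_IFFforHomotMaxNonIntegrable} together with the sign convention in the remark after Proposition~\ref{PROP_EngelConditionOnTheta}, $\sD_L$ positively generates $\sE$ on $\gamma$ up to homotopy if and only if $\max_\eta\Phi^L_{\gamma,p}(\eta)>0$, i.e. $\rot{L}+\max_\eta\eps(\eta)>0$. Setting $c:=\max_\eta\eps(\eta)$ rewrites this as $\rot{L}>-c$; note $\eps(0)=0$ always, so $c\ge 0$. In the hyperbolic case the shape of $\eps$ gives $c\in(0,\pi/2)$, and in the positive-parabolic case $c\in(0,\pi)$. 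In the negative-parabolic case $\eps\le 0$ everywhere with $\eps(0)=0$, so $c=0$ and the criterion collapses to $\rot{L}>0$, which is (3).

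The main obstacle I anticipate is pinning down the sharp constants and the correct one-sided behaviour rather than a crude estimate. Decomposing $r$ through the three factors $P^{-1}$, $R(\eta)$, $P$ and bounding each displacement independently only yields $|\eps|<\pi$ in the hyperbolic case (and $<2\pi$ for parabolic), because the contributions of $P$ and $P^{-1}$ are double-counted; the improvement to $\pi/2$ (respectively the clean sign) genuinely uses the alignment $L(p)=A(p)$, which collapses $\eps$ into the displacement of one direction under a single element $P$, together with the elementary fact that a hyperbolic (resp. parabolic) element cannot displace a direction as far as $\pi/2$ (resp. $\pi$). A secondary technical point is that the supremum defining $c$ must be attained, so that the inequalities are strict and the threshold $0$ in the negative-parabolic case is exact; this follows from continuity and periodicity of $\eps$.
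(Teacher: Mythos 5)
Your proposal is correct and follows essentially the same route as the paper: the conjugation $M_\eta(T)=PR(\eta)P^{-1}$ of the normalized return map $P\in\pf{PSL}(2,\R)$, the choice of normal forms for $P$, the alignment $L(p)=A(p)$ with a fixed direction, and the identification of $\eps(\eta)=\Phi^L_{\gamma,p}(\eta)-\rot{L}$ with the angular displacement of the direction at angle $\eta$ under the single map $P$ are exactly the paper's steps. Your two additions are refinements rather than departures: the explicit relation $\tan(\eta+\eps)=e^{2\mu}\tan\eta$ with extrema $\pm\left(\tfrac{\pi}{2}-2\arctan e^{-\mu}\right)$ substantiates the displacement bound that the paper merely asserts, and your convention $c=\max_\eta\eps(\eta)$ repairs the paper's formula $c=|\min_\eta(\eps(\eta))|$, which is inconsistent in the positive-parabolic case (there $0\le\eps<\pi$, so the paper's formula would give $c=0$, contradicting the stated $c\in(0,\pi)$), while agreeing with it in the hyperbolic case where $\eps$ is odd in $\eta$.
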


\begin{remark}
	The previous result ensures that the only cases where it can happen that $\rot{L}\le0$ and nonetheless $\sD_L=\bra W,\,L\ket$ positively generates $\sE$ up to homotopy on $\gamma$ occur when $\gamma$ is hyperbolic or positive parabolic.
	Moreover in these cases $\rot{L}$ is not allowed to be ``too negative''.
	In order to overcome this subtlety we will consider the quantity $\maxrot$ in what follows.
\end{remark}

The previous proposition gives crucial information in the case of an unknotted closed orbit.
\begin{corollary}\label{COR_ContractibleAreObstructionForEngel}
	Let $\sE=\bra W,\,A,\,B\ket$ be a distribution of rank $3$ such that $[\sW,\sE]\subset\sE$, and let $\gamma$ be an unknotted elliptic closed orbit for $W$.
	Then the rotation number $r=\rot{L}$ of $L\in\Gamma\bra A,\, B\ket$ does not depend on $L$.
	In particular there exists an oriented plane field $\sD$ such that $\sW\subset\sD\subset\sE$ which positively generates $\sE$ on a neighbourhood of $\gamma$ if and only if $r>0$.
	
	\begin{proof}
		Let $L$ and $L'$ be non-singular vector fields in $\bra A,\, B\ket$, recall that these can be identified with maps $L,L':M\to S^1$.
		Since $\gamma$ is unknotted, there is an embedded disc $D^2$ such that $\partial D^2=\gamma$, hence there exists a homotopy between $L$ and $L'$.
		Since $\gamma$ is elliptic, by point (4) of Proposition~\ref{PROP_RotForTypeOfClosedOrbits} we have that $r=\rot{L}=\rot{L'}$.
		The second claim follows now directly from Theorem~\ref{THM_IFFforHomotMaxNonIntegrable}.
	\end{proof}
\end{corollary}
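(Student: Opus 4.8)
The plan is to prove the two assertions in turn, reducing the first to the combination of Lemma~\ref{LEM_HomInvarianceRot} with part (4) of Proposition~\ref{PROP_RotForTypeOfClosedOrbits}, and the second to Theorem~\ref{THM_IFFforHomotMaxNonIntegrable}. The starting observation, used throughout, is that $\rot{L}=\theta(p;T)-\theta(p;0)$ depends only on the restriction $L|_\gamma$, since $\tilde L(p;t)$ for $t\in[0,T]$ involves only the values of $L$ along the orbit $\gamma$.

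For the independence of $r$ I would first record that $L,L'\in\Gamma\bra A,\,B\ket$, identified via the framing $\{A,\,B\}$ with maps $M\to S^1$, restrict to degree-zero maps on $\gamma$. Indeed, since $\gamma$ is unknotted it bounds an embedded disc $D^2\subset M$, and $L|_{D^2}$ exhibits a null-homotopy of $L|_\gamma$; likewise for $L'$. Two degree-zero maps $S^1\to S^1$ are freely homotopic, so there is a smooth family $L_\tau$ of nowhere-singular sections of $\bra A,\,B\ket$ along $\gamma$ with $L_0=L$ and $L_1=L'$. The delicate point is that part (4) of Proposition~\ref{PROP_RotForTypeOfClosedOrbits} only records invariance under the \emph{constant} rotations $R(\eta)$, whereas such a free homotopy moves $L_\tau(p)$ arbitrarily around the circle. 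To bridge this I would straighten the homotopy: let $\eta(\tau)$ be the continuous lift, with $\eta(0)=0$, of the angle by which $L_\tau(p)$ differs from $L_0(p)$, and set $\hat L_\tau:=R(-\eta(\tau))\circ L_\tau$, so that $\hat L_\tau(p)=L_0(p)$ for all $\tau$. Then Lemma~\ref{LEM_HomInvarianceRot} gives $\rot{\hat L_0}=\rot{\hat L_1}$; since $\hat L_0=L$ and $\hat L_1=R(-\eta(1))\circ L'$, and since ellipticity lets Proposition~\ref{PROP_RotForTypeOfClosedOrbits}(4) absorb the leftover constant phase, $\rot{R(-\eta(1))\circ L'}=\rot{L'}$, giving $\rot{L}=\rot{L'}=:r$. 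In particular $\maxrot=r$ for elliptic $\gamma$, as the function $\Phi^L_{\gamma,p}$ is then the constant $r$.

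For the second assertion I would invoke Theorem~\ref{THM_IFFforHomotMaxNonIntegrable}. If $r>0$ then $\maxrot=r>0$, and the construction in the proof of that theorem—homotoping $\theta$ inside a flow box of $\gamma$ to an increasing step profile—produces $L_1$ with $\bra W,\,L_1\ket$ positively generating $\sE$ on a neighbourhood of $\gamma$; one takes $\sD=\bra W,\,L_1\ket$. Conversely, if some oriented $\sD=\bra W,\,L\ket$ positively generates $\sE$ on a neighbourhood of $\gamma$, then it does so along $\gamma$ itself, so by Proposition~\ref{PROP_EngelConditionOnTheta} and the orientation remark following it we have $\dot\theta(p;s)>0$ for all $s$; integrating over a period yields $r=\rot{L}=\int_0^T\dot\theta(p;s)\,ds>0$.

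The main obstacle is exactly the passage in the first assertion from ``freely homotopic restrictions $L|_\gamma$'' to ``equal rotation numbers''. Everything else is a direct appeal to results already established: the degree computation from unknottedness, the elliptic case of Proposition~\ref{PROP_RotForTypeOfClosedOrbits}, and Theorem~\ref{THM_IFFforHomotMaxNonIntegrable} for the generation statement. The compensating rotation $R(-\eta(\tau))$—which lets Lemma~\ref{LEM_HomInvarianceRot} handle the value at $p$ while part (4) mops up the remaining constant phase—is the one genuinely new step, and I would take care to check that $\hat L_\tau$ remains a smooth nowhere-singular family (which it does, since $R$ preserves non-vanishing and $\eta(\tau)$ is a smooth lift).
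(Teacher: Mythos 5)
Your proposal is correct and takes essentially the same route as the paper: unknottedness forces $L|_\gamma$ and $L'|_\gamma$ to have degree zero, the elliptic case (4) of Proposition~\ref{PROP_RotForTypeOfClosedOrbits} combined with Lemma~\ref{LEM_HomInvarianceRot} gives the independence of $\rot{L}$, and Theorem~\ref{THM_IFFforHomotMaxNonIntegrable} yields the generation statement. Your straightening $\hat L_\tau=R(-\eta(\tau))\circ L_\tau$ simply makes explicit the reduction of a free homotopy to a based one plus a constant phase, a step the paper leaves implicit in this proof (it performs the analogous reduction, via abelianness of $\pi_1(S^1)$, inside the proof of Theorem~\ref{THM_IFFforHomotMaxNonIntegrable}).
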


The previous corollary gives a necessary condition for a distribution of rank $3$ to admit a maximally non-integrable plane field.
Notice that the hypothesis that $\gamma$ is unknotted is equivalent to the fact that $\gamma$ is null-homotopic if the dimension of $M$ is greater than $3$.

\section{Morse-Smale vector fields}\label{SEC_MSVF}
The goal of this section is to apply the machinery developed in the previous sections to the case where $W$ is a non-singular Morse-Smale vector field.
Since the dynamics of these vector fields can be described once we understand neighbourhoods of the closed orbits, it is reasonable to expect that the rotation number will play a central role in this context.
We now recall some basic facts from the theory of Morse-Smale vector fields, see~\cite{irwin,katok,morgan} for more details.

\subsection{Morse-Smale vector fields and round handle decompositions}
A \emph{non-singular Morse-Smale vector field (NMS)} $W$ on a manifold $M$ is a non-singular vector field which satisfies the following conditions
\begin{enumerate}
	\item $W$ has finitely many closed orbits $\gamma_1,...,\gamma_k$ and they are all non-degenerate;
	\item the non-wandering set is the union of the closed orbits $\Omega=\gamma_1\cup\cdots\cup\gamma_k$;
	\item for every $i,j\in\{1,...,k\}$ the stable manifold $W^s(\gamma_i)$ and the unstable manifold $W^u(\gamma_j)$ intersect transversely.
\end{enumerate} 

The main reason why we are interested in Morse-Smale vector fields is that their dynamical properties are, in a sense, completely determined by what happens near the closed orbits and by how these are linked together.
Recall that a round handle decomposition (RHD) of $M$ is a decomposition of $M$ in pieces of the form $R_k=D^k\times D^{n-k-1}\times S^1$ called round handles.
We call $\partial_+R_k=D^k\times\partial D^{n-k-1}\times S^1=D^k\times S^{n-k-2}\times S^1$ the enter region or the positive boundary and $\partial_-R_k=\partial D^k\times D^{n-k-1}\times S^1=S^{k-1}\times D^{n-k-1}\times S^1$ the exit region or the negative boundary.
\begin{theorem}[\cite{morgan}]\label{THM_DecompositionAssociatedToNSMS}
	Let $W$ be a non-singular Morse-Smale vector field on $M$.
	Then $M$ admits a RHD $M_0\subset M_1\subset \cdots \subset M_k=M$ such that every round handle $R$ is a neighbourhood of closed orbits $\gamma$ of $W$ and the index of $R$ (as a handle) is the index of $\gamma$ (as a closed orbit).
	Moreover the attaching procedure is performed using the flow of $W$, which is transverse to every $M_i$.
\end{theorem}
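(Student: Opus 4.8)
The plan is to follow the classical filtration approach of Asimov and Morgan: first produce a smooth Lyapunov function whose critical sets are exactly the closed orbits, then use its sublevel sets to build the filtration $M_0\subset\cdots\subset M_k=M$, and finally identify each passage across a critical level with the attachment of a single round handle whose index matches the index of the corresponding orbit. Throughout, the flow of $W$ supplies the gluing data and remains transverse to every $M_i$.

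First I would construct a Lyapunov function $f:M\to\R$, smooth, strictly decreasing along orbits of $W$ on the complement of the non-wandering set $\Omega=\gamma_1\cup\cdots\cup\gamma_k$, and constant on each $\gamma_i$. Its existence is guaranteed because $W$ is Morse-Smale: the chain-recurrent set coincides with $\Omega$, so the fundamental theorem of dynamical systems (Conley) yields a complete Lyapunov function, which can then be smoothed and perturbed so that the $\gamma_i$ are its only critical sets and the critical values $c_1<\cdots<c_k$ are distinct. This last normalization is possible because the relation ``there is a flow line from $\gamma_j$ to $\gamma_i$'' is a partial order (it has no cycles, as the only recurrence sits on the orbits themselves), so it extends to a total order that $f$ realizes. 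Choosing regular values $a_0<c_1<a_1<\cdots<c_k<a_k$ and setting $M_i=f^{-1}((-\infty,a_i])$ produces a nested sequence of compact manifolds with boundary. Since $f$ strictly decreases along the flow on each regular level, $W$ is everywhere transverse to $\partial M_i$ and points inward, so the attaching is indeed performed by the flow.

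The heart of the argument, and the step I expect to be the main obstacle, is the local normal form near a single closed orbit. Fix $\gamma_i$ of period $T$ and let $k$ be its index, i.e.\ the dimension of the unstable subspace of the linearized Poincar\'e return map $P_i$ on a transverse slice. Since $\gamma_i$ is non-degenerate, $P_i$ is hyperbolic and its invariant splitting integrates to the local stable and unstable manifolds $W^s(\gamma_i)$ and $W^u(\gamma_i)$. Using a tubular neighbourhood of $\gamma_i$ together with this hyperbolic splitting, I would build an isolating block diffeomorphic to $R_k=D^k\times D^{n-k-1}\times S^1$, with the $S^1$ factor the orbit direction, the $D^k$ factor spanning the unstable directions, and the $D^{n-k-1}$ factor the stable directions; the flow then enters through $\partial_+R_k$ and exits through $\partial_-R_k$, matching the enter/exit regions of the statement. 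Making this precise requires a version of the Morse lemma for closed orbits: one must linearize $P_i$ up to a flow-box change of coordinates and check that the level sets of $f$ near $\gamma_i$ cut out exactly the handle boundaries. This is where essentially all of the analytic work lies.

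Finally I would glue. The topological content of crossing the level $c_i$ is that $M_i$ is obtained from $M_{i-1}$ by attaching $R_k$ along $\partial_-R_k\subset\partial_+M_{i-1}$, the attaching map being the first-hit map of the flow. The Morse-Smale transversality hypothesis $W^s(\gamma_i)\pitchfork W^u(\gamma_j)$ is what guarantees that, once the orbits are ordered by $f$, the exit region $\partial_-R_k$ of each handle is carried by the flow transversally onto the positive boundary $\partial_+M_{i-1}$ with no lower unstable manifold obstructing the gluing; hence the handles attach one at a time in the order prescribed by $f$, each of index equal to the index of its orbit. Assembling the isolating blocks with the regular product regions $f^{-1}([a_{i-1},a_i])\cong\partial M_i\times[0,1]$ (again identified via the flow) then yields the desired RHD with all the stated properties.
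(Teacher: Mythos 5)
Your proposal is correct in outline, but it takes a genuinely different route from the paper's. The paper follows Morgan directly: it orders the closed orbits by the relation $\gamma_i\le\gamma_j$ when $W^u(\gamma_i)\cap W^s(\gamma_j)\ne\emptyset$ (made a total order via Smale's no-cycle condition, which the paper quotes), and then builds $M_{i+1}$ from $M_i$ by hand, taking a tubular neighbourhood $R_{i+1}$ of the next orbit and attaching it along the flow lines of $W$ running between it and $M_i$; the price of this bare-hands construction is that the union has corners and its boundary is not transverse to $W$, which is repaired by the explicit corner-smoothing step of Figure~\ref{FIG_RoundingCorners}. You instead run the Meyer--Conley energy-function argument: a smooth Lyapunov function with critical set exactly $\Omega$ and distinct critical values realizing the order, sublevel sets as the filtration, and an isolating-block normal form near each orbit. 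What your route buys is that transversality of $W$ to every $\partial M_i$ is automatic (regular level sets of a function strictly decreasing along the flow) and no corner rounding is ever needed; what it costs is that the analytic weight moves into two lemmas you only gesture at. First, for Morse--Smale flows you should invoke Meyer's energy-function theorem directly rather than ``smoothing'' Conley's continuous Lyapunov function: smoothing while preserving strict decrease and controlling the critical set is itself a delicate matter, not a routine perturbation. Second, as written $f^{-1}([a_{i-1},a_i])\cong\partial M_{i-1}\times[0,1]$ is false, since that region contains the critical orbit $\gamma_i$; the product structure holds only on the complement of the isolating block. One further shared caveat: your identification of the block with the product $D^k\times D^{n-k-1}\times S^1$ silently assumes the stable and unstable bundles over $\gamma_i$ are orientable --- when the return map twists them one gets a twisted round handle --- but the paper's statement and sketch elide the same point, deferring it to \cite{morgan}, so this is a flaw inherited from the statement rather than introduced by you.
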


The idea of the proof is to order the closed orbits of $M$ via $\gamma_i\le\gamma_j$ if $W^u(\gamma_i)\cap W^s(\gamma_j)\ne0$, and reason by induction.
Otherwise said $\gamma_i\le\gamma_j$ if there is a orbit whose $\alpha$-limit is $\gamma_i$ and whose $\omega$-limit is $\gamma_j$.
The following result ensures that this is compatible with a total ordering of $\{\gamma_1,...,\gamma_k\}$.
\begin{theorem}[\cite{smale} (No cycle condition)]
	Let $\{\gamma_1,...,\gamma_k\}$ be the set of closed orbits of a non-singular Morse-Smale vector field with the ordering defined above.
	Then there exists no non-trivial sequence $\gamma_{i_1}\le\gamma_{i_2}\le\cdots\le\gamma_{i_1}$.
\end{theorem}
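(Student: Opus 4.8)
The plan is to argue by contradiction: assuming a nontrivial cycle $\gamma_{i_1}\le\gamma_{i_2}\le\cdots\le\gamma_{i_m}\le\gamma_{i_1}$ with not all the orbits equal, I would manufacture a \emph{transverse homoclinic orbit}, which is incompatible with condition (1) (finiteness of the set of closed orbits). First I would record the elementary dimension count. Write $u_i=\dim W^u(\gamma_i)$ and $s_i=\dim W^s(\gamma_i)$; hyperbolicity of a closed orbit of a non-singular field gives $u_i+s_i=n+1$ for every $i$. If $\gamma_i\le\gamma_j$ with $i\ne j$, then $W^u(\gamma_i)\cap W^s(\gamma_j)$ is nonempty and flow-invariant, hence at least one-dimensional, while transversality condition (3) makes its dimension equal to $u_i+s_j-n$; thus $u_i+s_j\ge n+1=u_j+s_j$, i.e. $u_i\ge u_j$. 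Reading this around the cycle forces $u_{i_1}\ge u_{i_2}\ge\cdots\ge u_{i_1}$, so all orbits of the cycle share one common unstable dimension $u$, and each heteroclinic set $W^u(\gamma_{i_\ell})\cap W^s(\gamma_{i_{\ell+1}})$ is \emph{exactly} one-dimensional, hence a finite union of connecting orbits.

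Next I would propagate unstable manifolds around the cycle using the inclination ($\lambda$-) lemma. Fix a connecting orbit $a_\ell\subset W^u(\gamma_{i_\ell})\cap W^s(\gamma_{i_{\ell+1}})$ and pass to a Poincar\'e return section $D$ of dimension $n-1$ at a point $q\in\gamma_{i_{\ell+1}}$. There $W^u_{\mathrm{loc}}(q)$ has dimension $u-1$ and $W^s_{\mathrm{loc}}(q)$ has dimension $n-u$, while the slice of $W^u(\gamma_{i_\ell})$ through $a_\ell$ is a $(u-1)$-disc meeting $W^s_{\mathrm{loc}}(q)$ transversally in a single point. This is exactly the classical hypothesis of the $\lambda$-lemma, so forward return-iterates of that disc accumulate in the $C^1$ topology on $W^u_{\mathrm{loc}}(q)$, and after flow-saturation $W^u(\gamma_{i_\ell})$ $C^1$-accumulates on compact pieces of $W^u(\gamma_{i_{\ell+1}})$. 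Chaining this along $\ell=1,\dots,m-1$ gives that $W^u(\gamma_{i_1})$ $C^1$-accumulates on compact pieces of $W^u(\gamma_{i_m})$.

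Finally I would use the closing connection $a_m\subset W^u(\gamma_{i_m})\cap W^s(\gamma_{i_1})$, which is transverse, to extract a transverse homoclinic point of $\gamma_{i_1}$. Choose $z\in a_m$ bounded away from both $\gamma_{i_m}$ and $\gamma_{i_1}$, together with a small compact piece $K\subset W^u(\gamma_{i_m})$ through $z$ transverse to $W^s(\gamma_{i_1})$. The $C^1$-accumulation provides a piece of $W^u(\gamma_{i_1})$ so close to $K$ that it also meets $W^s(\gamma_{i_1})$ transversally at a point $w$ near $z$; since $w\notin\gamma_{i_1}$, the point $w$ is a transverse homoclinic point of $\gamma_{i_1}$. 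By the Birkhoff--Smale theorem (see, e.g., \cite{katok}) a transverse homoclinic orbit forces a horseshoe in every neighbourhood, hence infinitely many closed orbits, contradicting condition (1).

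The main obstacle is the bookkeeping in the inclination-lemma step: one must work with the Poincar\'e return maps near each orbit, verify that the discs cut out inside the unstable manifolds have precisely the dimension required by the $C^1$ $\lambda$-lemma (which is what the preliminary dimension count secures), and control the $C^1$-closeness through the finitely many compositions around the cycle so that the terminal intersection with $W^s(\gamma_{i_1})$ is genuinely transverse and genuinely off $\gamma_{i_1}$. Everything else is either the elementary dimension count or an appeal to the standard homoclinic--horseshoe dichotomy.
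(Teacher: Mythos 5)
Your argument is correct, and it is essentially the classical Palis--Smale proof of the no-cycle property; note that the paper itself gives no proof at all here, only the citation to Smale's survey, so there is nothing internal to compare against. Your dimension count is right ($u_i+s_i=n+1$ because $W^u(\gamma_i)$ and $W^s(\gamma_i)$ share the orbit direction, and a nonempty invariant intersection $W^u(\gamma_i)\cap W^s(\gamma_j)$ with $\gamma_i\ne\gamma_j$ contains a non-closed orbit, hence $u_i+s_j-n\ge1$, giving $u_i\ge u_j$ and equality around a cycle), and it is exactly what makes the slice discs have the dimension $u-1$ required by the $C^1$ $\lambda$-lemma for the return maps; the chaining and the extraction of a transverse homoclinic point then go through as you describe, since transversality is $C^1$-open on compact pieces. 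The one place you differ from the standard argument is the conclusion: the traditional route observes that $C^1$-accumulation of $W^u(\gamma_{i_1})$ back onto a piece through the heteroclinic point forces every point of the connecting orbits in the cycle to be \emph{non-wandering}, contradicting condition (2) directly, whereas you pass through Birkhoff--Smale to produce a horseshoe and contradict finiteness of closed orbits in condition (1). Your ending is heavier machinery but perfectly valid (and arguably gives more: it localizes infinitely many closed orbits near the homoclinic orbit). Two cosmetic points: the claim that the one-dimensional heteroclinic set is a \emph{finite} union of connecting orbits is not justified (a priori it could contain infinitely many orbits), but you only ever use one of them, so nothing is lost; and the transverse intersection $W^u(\gamma_{i_1})\cap W^s(\gamma_{i_1})$ near $z$ is one-dimensional in $M$ (a homoclinic orbit), not a point --- the single-point phrasing is correct only after cutting with a Poincar\'e section, which is where you in fact apply Birkhoff--Smale anyway. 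Finally, your argument silently assumes $m\ge2$; for a one-orbit cycle $\gamma_{i_1}\le\gamma_{i_1}$ the non-trivial connecting orbit is already a transverse homoclinic orbit and the last step applies immediately, so the proof is complete once you say this.
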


In order to construct the RHD of $M$ one starts by attaching the source orbits by disjoint union, then a generic point in $M\setminus\{\gamma_1,...,\gamma_k\}$ has to have one of the source orbits as $\omega$-limit.
Suppose that we have constructed inductively $M_i$ such that
\begin{itemize}
	\item $\gamma_1,...,\gamma_i\in M_i$;
	\item $\gamma_j\cap M_i=\emptyset$ for $j>i$;
	\item the flow is transverse pointing outward on $\partial M_i$.
\end{itemize}
We take a small tubular neighbourhood $R_{i+1}$ of $\gamma_{i+1}$, the construction of the ordering ensures that points in $R_{i+1}\setminus{\gamma_i}$ have $\omega$-limit in $M_i$.
We attach $R_{i+1}$ using all flow lines of $W$ that have $\omega$-limit in $M_i$.
The problem with this procedure is that it may introduce corners.
Moreover the boundary of $M_{i+1}$ will not be transverse to $W$.
The solution is to smoothen the corners as illustrated in Figure~\ref{FIG_RoundingCorners}.
\begin{figure}
	\centering
	\includegraphics[width=0.8\columnwidth]{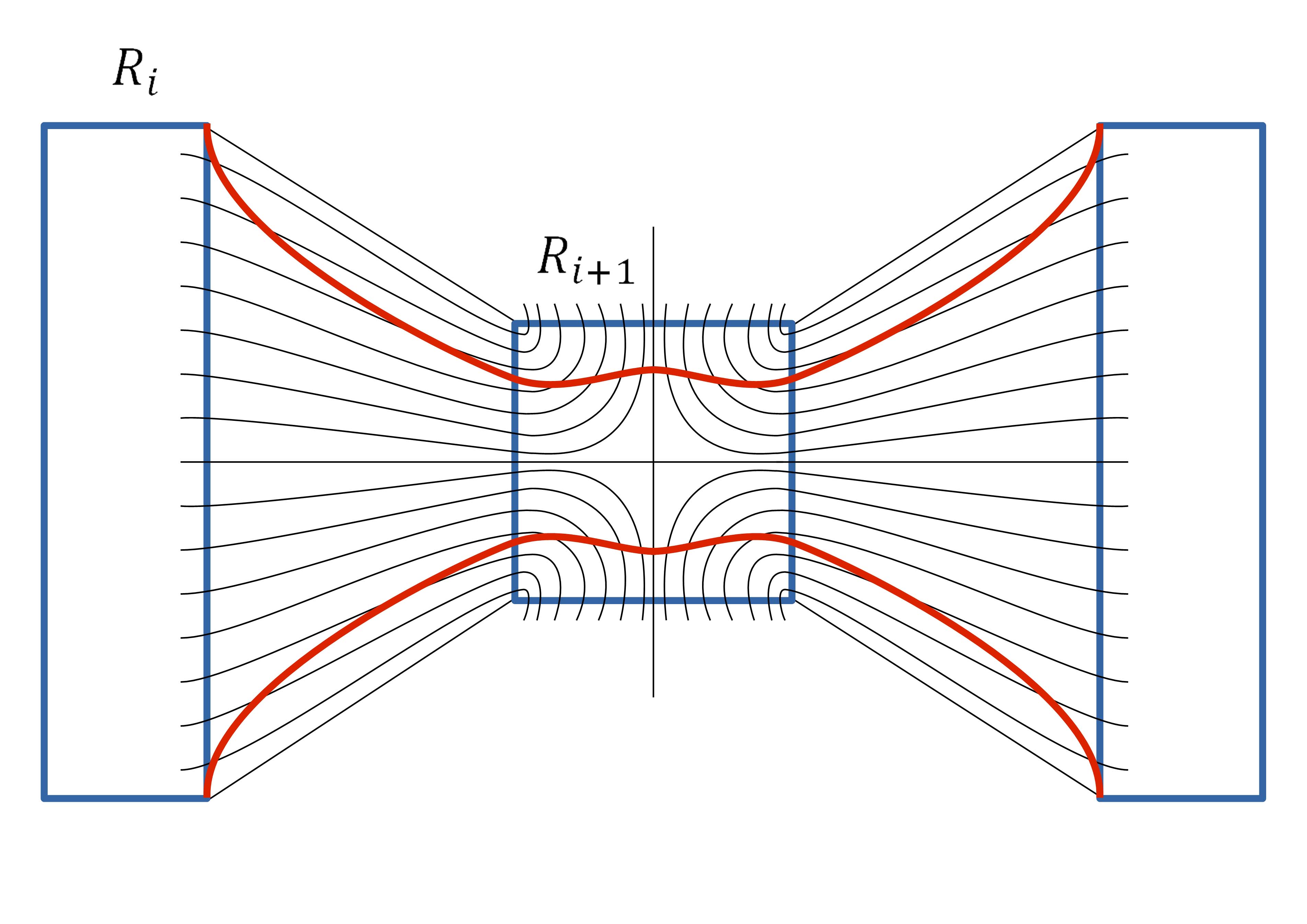}
	\caption{Smoothen the corners.}
	\label{FIG_RoundingCorners}
\end{figure}

For further details on the proof see~\cite{morgan}, and for the basic theory of Morse-Smale vector fields see~\cite{irwin, katok}.

\subsection{Morse-Smale flows preserving a $3$-distribution}
Let $\sE=\bra W,\,A,\,B\ket$ be a rank $3$ distribution such that $[\sW,\sE]\subset\sE$ and suppose that $W$ is NMS.
The following result furnishes a necessary and sufficient condition for the existence of $\sD\subset\sE$ that generates $\sE$.
\begin{theorem}\label{THM_ExistenceOfDForNMS}
    Let $\sE=\bra W,\,A,\,B\ket$ be a rank $3$ distribution such that $[\sW,\sE]\subset\sE$, and let $W$ be a NMS vector field.
    There exists $\sD\subset\sE$ that positively generates $\sE$ if and only if there exists a vector field $L\in\bra A,\,B\ket$ such that $\maxrot>0$ for all $\gamma$ closed orbit of $W$.
\end{theorem}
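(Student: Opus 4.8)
The plan is to treat the two implications separately. For necessity I argue infinitesimally, and for sufficiency I run an induction over the round handle decomposition of Theorem~\ref{THM_DecompositionAssociatedToNSMS}.

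Suppose first that $\sD$ positively generates $\sE$. By the conventions of Section~\ref{SEC_Notation} this means $\sD=\bra W,\,L\ket$ for some $L$ nowhere tangent to $\sW$, and replacing $L$ by its $\bra A,\,B\ket$-component (which alters neither $\sD$ nor $\theta$) we may assume $L\in\bra A,\,B\ket$. Fix a closed orbit $\gamma$ of period $T$ and a point $p\in\gamma$. Positive generation forces $\dot\theta(p;s)>0$ for every $s$, by Proposition~\ref{PROP_EngelConditionOnTheta} and the remark following it, so
\[
\rot{L}=\theta(p;T)-\theta(p;0)=\int_0^T\dot\theta(p;s)\,ds>0.
\]
Since $\Phi^L_{\gamma,p}(0)=\rot{L}$ we get $\maxrot\ge\rot{L}>0$, and as $\gamma$ was arbitrary the same $L$ witnesses the right-hand side.

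For sufficiency, start from an $L\in\bra A,\,B\ket$ with $\maxrot>0$ on every closed orbit $\gamma_1,\dots,\gamma_k$. The first step is to repair positivity along the non-wandering set. Applying the construction in the converse part of Theorem~\ref{THM_IFFforHomotMaxNonIntegrable} to each $\gamma_j$ — positivity of $\maxrot$, rather than mere non-vanishing, yielding \emph{positive} generation by the sign analysis of Proposition~\ref{PROP_RotForTypeOfClosedOrbits} — I homotope $L$ within $\sE$ by a homotopy supported in a small neighbourhood $N_j$ of $\gamma_j$ so that $\bra W,\,L\ket$ positively generates $\sE$ on $N_j$. As the $\gamma_j$ are finite in number and pairwise disjoint these homotopies have disjoint supports and combine into a single $L'$ that positively generates on $N:=\bigsqcup_j N_j$. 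It remains to propagate positive generation over the wandering region, and this is where being NMS is decisive: away from $N$ there is no recurrence, so the only genuine obstructions sit on the closed orbits, which have already been dealt with. Concretely I use the filtration $M_0\subset M_1\subset\cdots\subset M_k=M$ of Theorem~\ref{THM_DecompositionAssociatedToNSMS}, whose handles are the neighbourhoods $R_{i+1}$ of the $\gamma_{i+1}$ and whose attaching is carried out by the flow of $W$, transverse to each $M_i$. Two facts drive the induction: positive generation is an \emph{open} condition, and it is \emph{invariant along the flow}, since Equation~\eqref{EQN_thetadot(t+s)} together with $\lambda>0$ in Equation~\eqref{EQN_alphaUnderTheFlow} gives $\dot\theta(\phi_s(p);0)=(\rho^2/\lambda)\,\dot\theta(p;s)$ with $\rho^2/\lambda>0$. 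Assuming inductively that $\bra W,\,L'\ket$ positively generates on a neighbourhood of $M_i$, I attach $R_{i+1}$: on its core orbit positivity already holds, and on the complement of the core the flow furnishes flow-box coordinates $(q,t)$ in which positive generation reads $\partial_t\psi>0$. Shrinking $R_{i+1}$, I interpolate $\psi$ between the boundary data inherited from $M_i$ and the data near $\gamma_{i+1}$ while keeping $\partial_t\psi>0$, exactly as in the homotopy drawn in Figure~\ref{FIG_HomotTheta}; this extends positive generation over $M_{i+1}$, and after $k$ steps over all of $M$.

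The main obstacle is precisely this handle-by-handle extension. One must fill in the angular function $\psi$ over each round handle $D^k\times D^{n-k-1}\times S^1$ with $\partial_t\psi>0$ everywhere, reconciling three simultaneous constraints: the boundary data coming from $M_i$, the positively generating data prescribed near the core orbit, and the need to carry out the interpolation uniformly in the stable ($D^{n-k-1}$) and unstable ($D^k$) directions of the handle. The openness and flow-invariance of positive generation, plus the freedom to take the handle thin so that the required monotone angular change stays small, are what make the interpolation possible; checking that the local fixes near distinct orbits and across successive handles remain mutually compatible is the technical heart of the argument.
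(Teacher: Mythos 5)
Your necessity argument and the overall skeleton of your sufficiency argument (repair $L$ near each closed orbit via Theorem~\ref{THM_IFFforHomotMaxNonIntegrable} and Proposition~\ref{PROP_RotForTypeOfClosedOrbits}, then induct over the round handle filtration of Theorem~\ref{THM_DecompositionAssociatedToNSMS}) match the paper. But there is a genuine gap at the inductive extension step, and you have placed your trust in exactly the tools that do not close it. On the attaching region, which has the form $\tilde R_k^+\times I$ with $W$ tangent to the $I$-factor, write $L=\cos f_t\,A+\sin f_t\,B$ for an $I$-family of angle functions $f_t$. A homotopy \emph{rel boundary} to a family with $\partial_t\psi>0$ exists if and only if the lifted angle functions satisfy $f_1>f_0$. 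This difference is a relative winding datum determined by the data you have already frozen near $M_{k-1}$ and near the core of $R_k$, and it is invariant under any homotopy rel boundary. None of the three mechanisms you invoke changes its sign: openness of positive generation is irrelevant to a homotopy-theoretic defect; flow-invariance (Equation~\eqref{EQN_thetadot(t+s)}) only transports positivity along orbits where it already holds; and shrinking the handle merely moves the dividing hypersurface without altering the relative winding along each flow line from $\partial M_{k-1}$ to the core neighbourhood. The hypothesis $\maxrot>0$ controls the closed orbits only, not this defect, so your sentence ``I interpolate $\psi$ \dots while keeping $\partial_t\psi>0$'' asserts precisely what can fail; the paper states this explicitly: ``There is no reason for this to happen in general.'' You correctly flag this reconciliation as the technical heart, but flagging it is not resolving it.

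The missing idea is the paper's collar-insertion trick, which deliberately changes $L$ (not merely its monotone representative rel boundary). Since $W$ is transverse to $\partial M_{k-1}$ and the already-constructed $\sD$ positively generates $\sE$ there, along each short flow segment through a collar $\partial M_{k-1}\times(-\eps,\eps)$ the vector field $L$ traces a small positively oriented embedded arc $h_p:(-\eps,\eps)\to S^1$. With $K$ bounding the angular deficit over $\tilde R_k^+$, one replaces $h_p$ by $\tilde h_p$ agreeing with $h_p$ on $\Op(\{-\eps,\eps\})$ but winding monotonically around $S^1$ more than $K$ worth of turns. The modified field still positively generates $\sE$ on the collar (it rotates monotonically there), coincides with the old $L$ near all closed orbits (so the rotation-number hypothesis and the already-built handles are untouched), and shifts the relative lift so that the new angle functions satisfy $f_1>f_0$ pointwise; only then does your monotone interpolation exist, after which the corners are rounded as in Theorem~\ref{THM_DecompositionAssociatedToNSMS}. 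Note also that this move is legitimate only because the theorem asks for existence of \emph{some} positively generating $\sD$, not one homotopic to $\bra W,\,L\ket$ rel the attaching region; without the insertion, your induction stalls at the first handle whose winding defect is negative.
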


The idea of the proof is to first construct $\sD$ on a neighbourhood of the closed orbits and then attach the handles using the flow of $W$.
Suppose that we have constructed $\sD$ on $M_{k-1}$ and we want to attach a new round handle $R_k$.
Using Theorem~\ref{THM_IFFforHomotMaxNonIntegrable} we get a plane field $\sD$ on $M_k=M_{k-1}\cup R_k$ which generates $\sE$ on a neighbourhood of $M_{k-1}$ and on a neighbourhood of the core of $R_k$.
On the attaching region we have $L=\cos{f_t}\, A+\sin{f_t}\, B$ for a family of angle functions $f_t$.
Now $\sD$ is homotopic to a plane field generating $\sE$ on the whole $M_k$ only if $f_0<f_1$.
This will not happen in general.

The idea to overcome this is that we are not interested in plane fields homotopic to $\bra W,\,L\ket$, we just need a plane field which positively generates $\sE$.
Hence instead of glueing $R_k$ directly, we first make sure to increase $f_1(p)$ using the fact that $W$ is transverse to $\partial M_{k-1}$.
We take a collared neighbourhood of the boundary where $L$ rotates positively ``a bit'' and we substitute it with one where $L$ rotates ``massively''.
In this way the condition $f_0<f_1$ is verified for the new field, which nonetheless coincides with $L$ in a neighbourhood of the closed orbits of $W$.

Once the glueing is done we use the procedure described in the proof of Theorem~\ref{THM_DecompositionAssociatedToNSMS} to smoothen the corners.
Indeed this ``digs'' the new $M_k$ inside the manifold with corners that we have just constructed.
After this process the plane field $\sD$ will be the restriction of the previously constructed one.

\begin{proof}[Proof of Theorem~\ref{THM_ExistenceOfDForNMS}]
	If such a plane field $\sD$ exists then we can take $L$ to be any vector $\sD=\bra W,\,L\ket$ and the claim follows by Proposition~\ref{PROP_EngelConditionOnTheta}.
	Conversely suppose that $L$ satisfies the above properties.
	The idea is to construct $\sD$ inductively using the decomposition provided by Theorem~\ref{THM_DecompositionAssociatedToNSMS}.
	
	The first step of the induction is to construct $\sD$ in the neighbourhood of the sources.
	This is possible thanks to Theorem~\ref{THM_IFFforHomotMaxNonIntegrable}.
	This procedure yields a plane field $\sD$ homotopic to $\bra W,\,L\ket$ which generates $\sE$ on the handle.
	Notice that we can make sure that the boundary of the (possibly disconnected) manifold that we obtain with this procedure is transverse to $W$.
	
	For the inductive step suppose that we have attached $k-1$ handles to obtain $M_{k-1}$, and that we want to attach the $k$-th handle $R_k$.
	Theorem~\ref{THM_DecompositionAssociatedToNSMS} ensures that $R_k$ is a neighbourhood of $\gamma_k$, and that the attaching procedure happens via the flow of $W$.
	We first construct $\sD$ on $R_k$ using Theorem~\ref{THM_IFFforHomotMaxNonIntegrable}, this is possible because of the hypothesis on $L$.
	The existence of $L$ also ensures that the $\sD$ on $M_{k-1}$ extends to a plane field on $M_k$ which generates $\sE$ on a neighbourhood of $M_{k-1}$ and of $\gamma_k$.
	
	In general we cannot homotope this plane field to a maximally non-integrable one on $M_k$.
	The problem is that the attaching region is of the form $R_k^+\times I$, where $R_k^+\times\{1\}$ is the subset of $R_k$ where $W$ points inwards, and $W$ is tangent to the $I$ factor on $R_k^+\times I$.
	This means that the restriction of $L$ to $\tilde{R}_k^+\times I$ takes the form $L=\cos{f_t}\, A+\sin{f_t}\, B$, where $f_t:{\tilde R}_k^+\times\{t\}\to \R$ is a $I$-family of angle functions.
	Hence we can homotope $L$ transversely to $\partial_t$ so that $\bra \partial_t,\,L\ket$ generates $\sE$ if and only if $f_1>f_0$.
	There is no reason for this to happen in general.
	
	Let $K=\max\{f_1(p)-f_0(p)|\,p\in {\tilde R}_k\}$.
	For any $p\in\partial M_{k-1}\times(-\eps,\eps)$ the vector field $L$ can be described by a map $h_p:(-\eps,\eps)\to S^1$ which is a small embedding.
	We substitute it with $\tilde h_p:(-\eps,\eps)\to S^1$ which coincides with $h$ on $\Op(\{-\eps,\eps\})$, and such that it makes a number of turns around $S^1$ bigger than $K$.
	The net effect of this is that the difference between the new angle functions $f_1$ and $f_0$ is positive.
	This ensures that we can homotope $L$ to a maximally non-integrable plane field within $\sE$ on the attaching region.
	
	We might now need to round the corners of $M_k$, and this can be done exactly as in the proof of Theorem~\ref{THM_DecompositionAssociatedToNSMS} (see Figure~\ref{FIG_RoundingCorners}).
\end{proof}

\section{Morse-Smale Legendrian vector fields}\label{SEC_MSLegendrian}
In this section we apply the above discussion to the case of a manifold $M$ of dimension $3$ and $\sE=TM$.
In this context a plane field $\sD\subset\sE$ which generates $\sE$ is a contact structure, so that Theorem~\ref{THM_IFFforHomotMaxNonIntegrable} gives immediately the following
\begin{corollary}
  Let $M$ be a closed orientable $3$-manifold, $W$ a non-singular vector field on $M$ such that $TM=\bra W,\,A,\,B\ket$, and $\gamma$ a closed orbit of $W$.
  The plane field $\sD_L=\bra W,\,L\ket$ is a contact structure on $\gamma$ up to homotopy\footnote{See Remark~\ref{REM_Homotopies}} if and only if $|\maxrot|\ne0$.
\end{corollary}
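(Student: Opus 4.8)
The plan is to deduce this directly from Theorem~\ref{THM_IFFforHomotMaxNonIntegrable}, after checking that its hypotheses hold here and that its conclusion specializes to the contact condition. First I would note that the assumption $[\sW,\sE]\subset\sE$ is automatic in this setting: since $\sE=TM$, every bracket lies in $TM=\sE$, so the characteristic condition holds trivially. The global framing $TM=\bra W,\,A,\,B\ket$ supplies exactly the data needed to define $\theta$, $\rot{L}$ and $\maxrot$ as in Sections~\ref{SEC_Notation} and~\ref{SEC_RotationNumber}, so the quantity $\maxrot$ appearing in the statement is well defined.

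The only point of substance is to match the two notions of non-integrability. On a $3$-manifold a $2$-plane field $\sD=\ker\alpha$ is a contact structure precisely when $\alpha\wedge d\alpha\ne0$, and for $\sD=\bra X,\,Y\ket$ the identity $d\alpha(X,Y)=-\alpha([X,Y])$ shows this is equivalent to $[X,Y]\notin\sD$, i.e.\ to $[\sD,\sD]=TM$. Hence ``$\sD_L$ generates $\sE$'' in the sense of Section~\ref{SEC_BasicNotions} is, for $\sE=TM$ with $\dim M=3$, literally the assertion that $\sD_L$ is a contact structure; the absolute value in $|\maxrot|$ accounts for both coorientations. Likewise ``generates $\sE$ on $\gamma$ up to homotopy within $\sE$'' unwinds, via the definition in Section~\ref{SEC_Notation}, to the existence of a family $L_\tau$ never tangent to $\sW$ with $\sD_{L_1}$ contact near $\gamma$, which is the meaning of ``is a contact structure on $\gamma$ up to homotopy''.

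With these identifications the corollary is a transcription of Theorem~\ref{THM_IFFforHomotMaxNonIntegrable}: $\sD_L=\bra W,\,L\ket$ generates $TM$ on $\gamma$ up to homotopy if and only if there is $p\in\gamma$ with $|\maxrot|>0$, and $|\maxrot|>0$ is the same condition as $|\maxrot|\ne0$. I do not expect a genuine obstacle; the entire content is the elementary observation that maximal non-integrability in dimension $3$ is the contact condition, together with the fact that $[\sW,TM]\subset TM$ holds for free, so the result follows formally from Theorem~\ref{THM_IFFforHomotMaxNonIntegrable}.
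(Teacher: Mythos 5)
Your proposal is correct and matches the paper exactly: the paper offers no separate argument, stating that Theorem~\ref{THM_IFFforHomotMaxNonIntegrable} gives the corollary immediately, and your two verifications --- that $[\sW,TM]\subset TM$ holds trivially, and that via $d\alpha(X,Y)=-\alpha([X,Y])$ the contact condition on a $3$-manifold is precisely $[\sD,\sD]=TM$, i.e.\ the generation condition of Section~\ref{SEC_BasicNotions} --- are exactly the specializations being invoked. Nothing is missing.
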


Theorem~\ref{THM_ExistenceOfDForNMS} gives the following
\begin{corollary}
    Let $M$ be a closed orientable $3$-manifold and let $W$ be a NMS vector field on $M$ such that $TM=\bra W,\,A,\,B\ket$.
    There exists a positive contact structure $\sD$ for which $W$ is Legendrian if and only if there exists a vector field $L\in\bra A,\,B\ket$ such that $\maxrot>0$ for all $\gamma$ closed orbit of $W$.
\end{corollary}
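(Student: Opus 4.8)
The plan is to deduce this statement directly from Theorem~\ref{THM_ExistenceOfDForNMS} applied with $\sE=TM$. First I would check the hypotheses of that theorem. Since $\sE=TM$ is the full tangent bundle, the condition $[\sW,\sE]\subset\sE$ is automatic, and $W$ is assumed to be NMS; the framing $TM=\bra W,\,A,\,B\ket$ orients $\sE$, which is exactly what is needed to speak of positive generation. Thus Theorem~\ref{THM_ExistenceOfDForNMS} applies verbatim in this setting.

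The remaining work is to translate the abstract language of \emph{generating} $\sE$ into contact geometry, a dictionary already set up in Section~\ref{SEC_BasicNotions}. A co-oriented $2$-plane field $\sD=\ker\alpha\subset TM$ on a $3$-manifold is a contact structure exactly when $\alpha\wedge d\alpha\ne0$, and for $X,\,Y\in\Gamma\sD$ the identity $d\alpha(X,Y)=-\alpha([X,Y])$ shows that this is equivalent to $[\sD,\sD]=TM$, i.e. to $\sD$ generating $TM$ in the sense of the paper. Next, since $W$ is non-singular, saying that $W$ is Legendrian for $\sD$ means precisely that $W$ is a nowhere-vanishing section of $\sD$, so that $\sD=\bra W,\,L\ket$ for some $L\in\bra A,\,B\ket$ never tangent to $\sW$. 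Finally, a \emph{positive} contact structure is one whose induced orientation agrees with that of $TM$ coming from the framing, which matches the notion of \emph{positively generating} $\sE$ used in Theorem~\ref{THM_ExistenceOfDForNMS}.

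With these identifications in place, the existence of a positive contact structure $\sD$ for which $W$ is Legendrian is exactly the existence of $\sD=\bra W,\,L\ket\subset TM$ positively generating $TM$, and Theorem~\ref{THM_ExistenceOfDForNMS} characterizes this by the existence of $L\in\bra A,\,B\ket$ with $\maxrot>0$ on every closed orbit $\gamma$ of $W$. I do not expect any genuine obstacle here: all of the dynamical content is already carried by Theorem~\ref{THM_ExistenceOfDForNMS} and the underlying round-handle construction, so the argument is a specialization rather than a new proof. The only point requiring care is the bookkeeping of orientation conventions---verifying that ``positive contact structure'' and ``positively generates $TM$'' refer to the same sign, and that the contact condition is genuinely equivalent to $[\sD,\sD]=TM$ rather than merely implied by it---but both reduce to the standard formula relating $d\alpha$ to the Lie bracket.
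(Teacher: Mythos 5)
Your proposal is correct and takes exactly the paper's route: the paper states this corollary as an immediate specialization of Theorem~\ref{THM_ExistenceOfDForNMS} to $\sE=TM$, relying on the dictionary already set up in Section~\ref{SEC_BasicNotions} identifying plane fields $\sD=\bra W,\,L\ket$ with $L\in\bra A,\,B\ket$ that generate $TM$ with orientable contact structures having $W$ Legendrian. The orientation and bracket bookkeeping you single out is precisely what that section settles, so nothing further is needed.
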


An interesting example of $3$-manifold admitting NMS vector fields is $S^3$.
On the other hand only very few $3$-manifolds admit such vector fields.
The following result classifies completely the topology of such manifolds.
\begin{theorem}[\cite{morgan}]
	Let $M$ be an orientable, prime $3$-manifold with boundary such that the Euler characteristic of every boundary component vanishes.
	Let $\partial_-M$ be an arbitrary union of these components.
	Suppose $M$ is not $S^1\times D^2$.
	The pair $(M,\partial_-M)$ admits a non-singular Morse-Smale flow if and only if $M$ is a union of non-trivial Seifert spaces attached to one another along components of their boundaries.
\end{theorem}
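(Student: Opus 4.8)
The plan is to reduce everything to round handle decompositions and then to recognize the Seifert structure combinatorially. By Asimov's correspondence between non-singular Morse-Smale flows and round handle decompositions --- which is the analytic core of Theorem~\ref{THM_DecompositionAssociatedToNSMS} --- the pair $(M,\partial_-M)$ admits an NMS flow if and only if $M$ admits a round handle decomposition built by attaching round handles along the flow, with negative boundary exactly $\partial_-M$. Since every boundary component has vanishing Euler characteristic and $M$ is orientable, each is a torus, and the flow must cross it transversally (outward along $\partial_-M$, inward along its complement). Thus I would prove the stated equivalence entirely at the level of round handle decompositions.

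For the easier implication, suppose $M$ is obtained by gluing nontrivial Seifert pieces $N_1,\dots,N_r$ along boundary tori. On each $N_i\to B_i$ I would build an NMS flow adapted to the Seifert fibration: combine the circle action along the fibers with a gradient-like field on the base orbifold $B_i$, so that the exceptional fibers and the critical points of the base field produce round $0$-, $1$-, and $2$-handles whose core circles are fibers. Near each boundary torus the flow is arranged to be ``vertical plus transverse,'' pointing outward or inward according to the prescribed $\partial_-$ data, so that the flows on adjacent pieces match across the common torus after a small isotopy. The hypotheses that each piece is a \emph{nontrivial} Seifert space and that $M\neq S^1\times D^2$ guarantee that the base orbifolds carry enough structure (exceptional fibers, or nonpositive orbifold Euler characteristic) to realize the required transverse boundary behaviour without degenerating.

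The substantial direction is the converse. Given an NMS flow, I would take its round handle decomposition and analyse the round handles. Round $0$- and $2$-handles are neighbourhoods of source and sink orbits and are solid tori, hence tautologically Seifert fibred by their core circles; the difficulty is concentrated entirely in the round $1$-handles $D^1\times D^1\times S^1$, which are attached along two annuli $S^0\times D^1\times S^1$ lying on the existing boundary tori. The strategy is to reorganize the flow by round-handle slides and cancellations so that handles whose core circles are mutually compatible as candidate fibers are grouped into blocks, each block fibering over a surface; the slopes of the attaching annuli determine when two adjacent blocks can be amalgamated and when they cannot. The tori along which amalgamation fails become the separating tori of the Seifert decomposition, and I would show they are incompressible, so that each resulting block is a genuine Seifert fibred space. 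Primeness excludes reducible blocks, while the exclusion of $S^1\times D^2$ together with nontriviality of the Seifert invariants ensures each block is nontrivial.

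I expect the main obstacle to be precisely this extraction of a global Seifert structure from an arbitrary round handle decomposition: controlling the attaching annuli of the round $1$-handles, proving that the candidate separating tori are incompressible (so the decomposition is realized geometrically and not merely combinatorially), and ruling out configurations that would collapse a block to $S^1\times D^2$. This is the point where one must appeal to recognition theorems for Seifert fibred spaces and to incompressibility arguments in the spirit of Waldhausen, rather than to soft flow-theoretic manipulations.
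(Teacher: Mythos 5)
First, a framing remark: the paper contains no proof of this statement --- it is quoted directly from Morgan \cite{morgan} --- so the only meaningful comparison is with Morgan's argument, and at the architectural level your outline is indeed a reconstruction of it: both directions run through round handle decompositions, with the equivalence between NMS flows and such decompositions (Asimov, together with the flow-to-decomposition direction recorded here as Theorem~\ref{THM_DecompositionAssociatedToNSMS}) as the analytic input, and the real content concentrated in the combinatorial analysis of round $1$-handles.

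That said, your sketch of the substantial converse direction has concrete gaps. (a) You take every round $1$-handle to be the product $D^1\times D^1\times S^1$ attached along two annuli; in dimension $3$ there are also \emph{twisted} round $1$-handles, where the $(D^1\times D^1)$-bundle over $S^1$ has monodromy $-\mathrm{id}$. These occur in orientable manifolds (the monodromy preserves orientation; they are neighbourhoods of hyperbolic orbits whose return map has negative eigenvalues, and they attach along a single annulus rather than two), and Morgan's amalgamation analysis must treat them as a separate case; your outline cannot see them. (b) Your plan to prove the decomposition tori incompressible is both stronger than what the theorem asserts and false for the decomposition the flow hands you: the boundary of every round $0$- and $2$-handle is a compressible torus bounding a solid torus, and the decomposition in Morgan's statement is not a JSJ decomposition. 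The actual work goes in the opposite direction: one must \emph{absorb} the trivial solid-torus pieces into adjacent blocks --- a solid torus glued to a boundary torus of a Seifert piece either extends the fibration (possibly creating an exceptional fibre) or, when its meridian matches the fibre slope, produces the degenerate configurations that primeness and the exclusion of $S^1\times D^2$ are there to rule out. Your block-amalgamation step never explains how the trivial pieces arising from round $0$-/$2$-handles disappear, yet this is precisely where the conclusion ``union of \emph{non-trivial} Seifert pieces'' is earned; indeed your parenthetical claim that these handles are ``tautologically Seifert fibred'' pieces is in tension with the nontriviality required of the final blocks, and appealing to Waldhausen-type recognition theorems does not substitute for the absorption argument. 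The forward direction is essentially right, though near exceptional fibres ``circle action plus gradient on the base'' needs the explicit fibred round-handle models (as Morgan builds) to guarantee Morse--Smale transversality and the prescribed transverse behaviour along $\partial_-M$.
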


\begin{remark}
It is interesting to know when a given vector field $L\in\fX(M)$ is transverse to a contact structure.
This question was already studied in \cite{girouxMW} for the case where $L$ is tangent to the fibres of a $S^1$-bundle over a surface, and in \cite{lisca} for the case $L$ tangent to the fibres of a Seifert fibration.

Notice that if $L$ is Legendrian for some orientable contact structure $\sD$ then there is a contact structure $\tilde\sD$ transverse to $L$.
Indeed choose $\tilde L$ such that $\sD=\bra L,\,\tilde L\ket$ and consider $\tilde\sD=\phi_*\sD$, where $\phi$ denotes the flow of $\tilde L$ for small time.
The contact condition ensures that $\tilde\sD$ is transverse to $\sD$, moreover it contains $\tilde L$, so it is transverse to $L$.

 With the techniques developed in this paper we can present an example of a vector field which is transverse to a contact structure but never Legendrian.
 Namely consider the field $W$ normal to the canonical Reeb foliation on $S^3$.
 This can be obtained by glueing a sink and source with trivial monodromy on $S^1\times D^2$ via the $(1,1)$-map.

 Using the theory of confoliations \cite[Chapter 2]{confoliations} we can $\sC^0$-deform the tangent bundle of the Reeb foliation to get a contact structure, so that $L$ is transverse to a contact structure.
 On the other hand $L$ has two unknotted closed orbits which have trivial monodromy, which means that they are elliptic and have rotation number $0$.
 This obstructs the existence of a contact structure for which $L$ is Legendrian.
\end{remark}

\section{Morse-Smale even contact structures}\label{SEC_MSEngel}
We now turn the attention to the case where $\dim M=4$ and $\sE$ is an even contact structure with characteristic foliation $W$.
Theorem~\ref{THM_IFFforHomotMaxNonIntegrable} gives immediately
\begin{corollary}
  Let $M$ be a closed orientable $4$-manifold, $\sE=\bra W,\,A,\,B\ket\subset TM$ an even contact structure with characteristic foliation spanned by $W$, and $\gamma$ a closed orbit of $W$.
  The plane field $\sD_L=\bra W,\,L\ket\subset\sE$ is an Engel structure on $\gamma$ up to homotopy\footnote{See Remark~\ref{REM_Homotopies}} if and only if $|\maxrot|\ne0$.
\end{corollary}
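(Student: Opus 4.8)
The plan is to recognize this corollary as an immediate specialization of Theorem~\ref{THM_IFFforHomotMaxNonIntegrable} to the setting $\dim M = 4$ with $\sE$ an even contact structure. The key observation is that the general machinery already applies: by definition of an even contact structure, the characteristic foliation $\sW = \bra W \ket$ is precisely the unique line field satisfying $[\sW, \sE] \subset \sE$, so the hypothesis of Theorem~\ref{THM_IFFforHomotMaxNonIntegrable} is met verbatim. The only additional content to verify is the dictionary between ``$\sD_L$ generates $\sE$ on $\gamma$ up to homotopy'' and ``$\sD_L$ is an Engel structure on $\gamma$ up to homotopy.''

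First I would recall from Section~\ref{SEC_BasicNotions} that a $2$-plane field $\sD \subset \sE$ with $\sW \subset \sD$ satisfies $[\sD, \sD] = \sE$ exactly when $\sD$ is an Engel structure inducing $\sE$ as its even contact structure; this is the definition of $\sD$ being \emph{maximally non-integrable within} $\sE$. Since any $L \in \Gamma\sE$ never tangent to $\sW$ yields $\sD_L = \bra W, L \ket$ with $\sW \subset \sD_L$, the notion of $\sD_L$ generating $\sE$ coincides with $\sD_L$ being Engel. The phrase ``on $\gamma$ up to homotopy'' transfers directly from the Definition in Section~\ref{SEC_Notation}, since the homotopies considered are exactly those of the form $\sD_{L_\tau} = \bra W, L_\tau \ket$ tangent to $\sW$ (see Remark~\ref{REM_Homotopies}, to which the footnote points).

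With this identification in place, the corollary follows by applying Theorem~\ref{THM_IFFforHomotMaxNonIntegrable} directly: $\sD_L$ generates $\sE$ on $\gamma$ up to homotopy within $\sE$ if and only if there is a point $p \in \gamma$ with $|\maxrot| > 0$, which is precisely the condition $|\maxrot| \ne 0$. I anticipate no genuine obstacle here, as the result is purely expository in character; the substantive work was done in establishing Theorem~\ref{THM_IFFforHomotMaxNonIntegrable}. The only point requiring care is confirming that the framing $\sE = \bra W, A, B \ket$ assumed in the corollary is compatible with the framing hypothesis of the theorem, which it is by construction. I would therefore present the proof as a single sentence invoking the theorem together with the definitional equivalence between generating $\sE$ and being an Engel structure.

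\begin{proof}
	By definition an even contact structure $\sE$ has a unique characteristic foliation $\sW = \bra W \ket$ satisfying $[\sW, \sE] \subset \sE$, so the hypotheses of Theorem~\ref{THM_IFFforHomotMaxNonIntegrable} hold. As observed in Section~\ref{SEC_BasicNotions}, a plane field $\sD_L = \bra W, L \ket \subset \sE$ generates $\sE$ precisely when it is an Engel structure inducing $\sE$, and the relevant homotopies are those of the form $\sD_{L_\tau} = \bra W, L_\tau \ket$. Hence $\sD_L$ is an Engel structure on $\gamma$ up to homotopy if and only if $\sD_L$ generates $\sE$ on $\gamma$ up to homotopy within $\sE$, which by Theorem~\ref{THM_IFFforHomotMaxNonIntegrable} holds if and only if $|\maxrot| \ne 0$.
\end{proof}
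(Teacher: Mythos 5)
Your proof is correct and takes essentially the same route as the paper, which states this corollary as an immediate consequence of Theorem~\ref{THM_IFFforHomotMaxNonIntegrable}; your verification that the characteristic foliation satisfies $[\sW,\sE]\subset\sE$ and that ``$\sD_L$ generates $\sE$'' is, for an even contact structure $\sE$, exactly the condition that $\sD_L$ be a compatible Engel structure is precisely the implicit dictionary the paper relies on.
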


We say that an even contact structure is Morse-Smale, if its characteristic foliation admits a section $W$ which is a NMS vector field.
Theorem~\ref{THM_ExistenceOfDForNMS} gives the following
\begin{corollary}
	Let $(M,\,\sE=\bra W,\,A,\,B\ket)$ be a closed, oriented Morse-Smale even contact $4$-manifold.
	Then there exists a positive Engel structure $\sD$ compatible with $\sE$ if and only if there exists a vector field $L\in\bra A,\,B\ket$ such that $\maxrot>0$ for all $\gamma$ closed characteristic orbits.
\end{corollary}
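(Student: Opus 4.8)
The plan is to recognize this corollary as a direct instance of Theorem~\ref{THM_ExistenceOfDForNMS}, once we verify that the even contact setting fits its hypotheses and that the notion of a positive Engel structure compatible with $\sE$ coincides with that of a plane field positively generating $\sE$.

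First I would check that the hypotheses of Theorem~\ref{THM_ExistenceOfDForNMS} hold. Since $M$ is a $4$-manifold and $\sE$ is an even contact structure, $\sE$ is a hyperplane field and hence a rank $3$ distribution. By the discussion in Section~\ref{SEC_BasicNotions}, the characteristic foliation $\sW=\bra W\ket$ of an even contact structure is the unique line field satisfying $[\sW,\sE]\subset\sE$, so this bracket condition is automatic. Finally, by definition a Morse-Smale even contact structure is one whose characteristic foliation admits a section $W$ that is a NMS vector field, so we may take such a $W$ as the first vector of the framing $\bra W,\,A,\,B\ket$.

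Next I would translate the Engel terminology into the language of generation. A positive Engel structure $\sD$ compatible with $\sE$ is by definition a $2$-plane field on $M$ with $[\sD,\sD]=\sE$; as recalled in Section~\ref{SEC_BasicNotions}, the characteristic foliation $\sW$ of $\sE=[\sD,\sD]$ must satisfy $\sW\subset\sD$, so $\sD$ has the form $\sD=\bra W,\,L\ket$ for some $L\in\bra A,\,B\ket$. Thus ``$\sD$ is a positive Engel structure compatible with $\sE$'' is exactly ``$\sD\subset\sE$ positively generates $\sE$,'' with positivity matching the orientation induced by the framing. Conversely, if $\sD\subset\sE$ positively generates $\sE$, then $[\sD,\sD]=\sE$ is even contact and $\sW\subset\sD$, so $\sD$ is automatically an Engel structure compatible with $\sE$; no further verification of the Engel condition is needed.

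With this dictionary in place, the statement becomes literally Theorem~\ref{THM_ExistenceOfDForNMS}: there exists $\sD\subset\sE$ positively generating $\sE$ if and only if there is $L\in\bra A,\,B\ket$ with $\maxrot>0$ for every closed orbit $\gamma$ of $W$, which are precisely the closed characteristic orbits of $\sE$. I expect no genuine obstacle here, since all the analytic work---the inductive handle-by-handle construction over the round handle decomposition and the homotopy that forces $f_1>f_0$ on each attaching region---is already carried out in the proof of Theorem~\ref{THM_ExistenceOfDForNMS}. The only point requiring care is the bookkeeping identification above, namely ensuring that the plane field produced by that theorem is genuinely an Engel structure rather than merely a generating plane field; this holds for free because $\sE$ is assumed to be even contact from the outset.
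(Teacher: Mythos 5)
Your proposal is correct and matches the paper exactly: the paper derives this corollary directly from Theorem~\ref{THM_ExistenceOfDForNMS}, using precisely the dictionary from Section~\ref{SEC_BasicNotions} identifying positive Engel structures compatible with $\sE$ with plane fields $\sD=\bra W,\,L\ket\subset\sE$ that positively generate $\sE$. Your extra care in checking both directions of that identification (including that $\sW\subset\sD$ is automatic and that generation plus the even contact hypothesis yields the Engel condition for free) is exactly the implicit content of the paper's one-line deduction.
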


It is not clear if every parallelizable manifold $4$-manifold admits a Morse-Smale even contact structure.
In fact many NMS flows on $4$-manifolds do not have the right monodromy for being the characteristic foliation of an even contact structure.
On the other hand if we allow $\sC^0$-perturbations of $W$ then we can always suppose that the closed orbits have tubular neighbourhoods $\nu \gamma=S^1\times D^3$ where $W$ writes as
\[
\restrto{W}{\nu\gamma}=\partial_{\theta}+2\eps_1x\partial_{x}+2\eps_2y\partial_{y}+4\eps_3z\partial_{z},
\]
where $\eps_i=\pm1$ depending on the index of $\gamma$.
These models always permit to construct an even contact form $\alpha$ on $\nu\gamma$ such that $W$ spans the characteristic foliation.
If the $\eps_i$ are all equal then $W$ is Liouville for (a multiple of) the symplectic form $\omega=dx\wedge dy+dz\wedge d\theta$, so that we take $\alpha=i_W\omega$.
If the $\eps_i$ are not all equal, then the vector field $V=2\eps_1x\partial_{x}+2\eps_2y\partial_{y}+4\eps_3z\partial_{z}$ preserves the contact structure defined by $\eta=dz-xdy+ydx$ on $D^3$, so that $\nu\gamma$ can be seen as the suspension of the time $1$ flow of $V$.

\begin{example}
	Morse-Smale even contact structures can be obtained by suspension of a Morse-Smale contactomorphism whose non-wandering set only consists of fixed points.
	A way of constructing such contactomorphism is to look for (singular) contact vector fields which are Morse-Smale and do not have closed orbits.
	
	An explicit example is given by the contact vector field $V$ on $(S^3,\,\alpha_{st})$ associated with the contact Hamiltonian $h(x_1,y_1,x_2,y_2)=y_1/2$.
	One can verify that $V$ take the form
	\[
		V(x_1,y_1,x_2,y_2)=\frac{1}{2}\left((1-x_1^2)\,\partial_{x_1}+y_1\,\partial_{y_1}+x_2\,\partial_{x_2}+y_2\,\partial_{y_2}\right).
	\]
	We get an even contact structure on $M=S^3\times S^1$ whose characteristic foliation only has $2$ closed orbits, namely a source and a sink.
	This is induced by an Engel structure since $M$ is obtained as a suspension of a contactomorphism isotopic to the identity (see \cite{mitsu}).
\end{example}

The methods developed in this paper are well-suited for constructing examples of even contact structures which do not admit compatible Engel structures.
Indeed it suffices to construct locally closed orbits of $\sW$ which are null-homotopic and have trivial monodromy and then extend these to the whole manifold via the complete h-principle in \cite{mcduff}.

\end{document}